\documentclass[12pt, reqno]{amsart}
\usepackage{amssymb,latexsym,amsmath,amsfonts}
\usepackage{supertabular}
\usepackage{enumerate}
\usepackage[mathscr]{eucal}
\usepackage{enumitem}
\usepackage{hyperref}

\usepackage[nameinlink]{cleveref}

\topmargin=-.15in
\hoffset=-62pt 
\textwidth=6.4in
\textheight=9.41in

\allowdisplaybreaks

\numberwithin{equation}{section}
\theoremstyle{definition}
\newtheorem{definition}{Definition}[section]

\theoremstyle{remark}
\newtheorem{remark}[definition]{Remark}

\theoremstyle{plain}
\newtheorem{proposition}[definition]{Proposition}
\newtheorem{theorem}[definition]{Theorem}
\newtheorem{lemma}[definition]{Lemma}
\newtheorem{result}[definition]{Result}
\newtheorem{corollary}[definition]{Corollary}

\usepackage[usenames]{color}

%colours
\definecolor{Red}{rgb}{1,0,0}
\definecolor{Magenta}{rgb}{0.69,0,0.83}
\definecolor{Green}{rgb}{0.117,0.706,0.314}
\definecolor{Grey}{rgb}{0.8,0.8,0.8}
\definecolor{Blue}{rgb}{0.1,0.1,1}

%special sets
\newcommand{\unitdisk}{\mathbb{D}}

\newcommand{\ball}{\mathbb{B}}

%special functions
\newcommand{\koba}{\mathsf{k}}

%useful abbreviations

\newcommand{\clos}[1]{\overline{#1}}
\newcommand{\cl}{\mathrm{cl}}
\newcommand{\lraw}{\longrightarrow}
\newcommand{\wt}{\widetilde}
\newcommand{\wh}{\widehat}
\newcommand{\range}{\mathsf{ran}}

%the symbols for "defined to be equal to" and "defines"
\newcommand*{\defeq}{\mathrel{\vcenter{\baselineskip0.5ex \lineskiplimit0pt \hbox{\scriptsize.}\hbox{\scriptsize.}}}=}

%special symbols

%domains
\newcommand{\bdy}{\partial}
\newcommand{\OM}{\Omega}

%function spaces
\newcommand{\smoo}{\mathcal{C}}

%miscellaneous

%spaces
\newcommand{\C}{\mathbb{C}} 
\newcommand{\R}{\mathbb{R}}

\newcommand{\posint}{\mathbb{Z}_{+}}

\begin{document}

\title[Horofunction compactifications]{Horofunction compactifications and local Gromov model domains}
\author{Vikramjeet Singh Chandel, Sushil Gorai, Anwoy Maitra and Amar Deep Sarkar}

\address{VSC: Department of Mathematics and Statistics, Indian Institute of Technology Kanpur,
Kanpur -- 208 016, India}
\email{vschandel@iitk.ac.in}
	
\address{SG: Department of Mathematics and Statistics, Indian Institute of Science Education 
and Research Kolkata, Mohanpur -- 741 246, India}
\email{sushil.gorai@iiserkol.ac.in}

\address{AM: Department of Mathematics, Indian Institute of Technology Madras,
Chennai -- 600 036, India}
\email{anwoy@iitm.ac.in}
    
\address{ADS: School of Basic Sciences, Indian Institute of Technology Bhubaneswar, Argul --
752 050, India}
\email{amar@iitbbs.ac.in}

\keywords{Kobayashi distance, visibility, end compactification, 
continuous extension, 
Gromov hyperbolicity, Gromov compactification, Gromov model domain,
horofunction compactification.}

\subjclass[2020]{Primary: 32F45, 30C20, 30D40, 53C23; Secondary: 32Q45, 53C22.}
		
\begin{abstract}
We explore the {\em horofunction compactification} of complete
hyperbolic domains in complex Euclidean space
equipped with the Kobayashi distance. 
We provide a sufficient condition under which, given a domain $\OM$ as above, the
identity map from $\OM$ to itself extends to an embedding of $\clos{\OM}$ into the horofunction
compactification of $(\OM,\koba_\OM)$, with $\koba_\OM$ denoting the Kobayashi distance on $\OM$.
Notably, this condition admits unbounded
domains that are {\em not} Gromov hyperbolic relative to the
Kobayashi distance.
We also provide a large class of planar hyperbolic domains satisfying the above condition.

\end{abstract}

\maketitle	
	
\section{Introduction and statement of results}\label{Sec:Intro}
Let $\OM\subset\C^d$ be a %complete
Kobayashi hyperbolic domain equipped with the Kobayashi distance 
$\koba_\OM$. 
In this article, we are concerned with the following question: for what sorts 
of
domains $\OM\subset\C^d$ can one identify the so-called {\em horofunction 
compactification} of 
$(\OM,\koba_\OM)$ with some appropriate {\em Euclidean} compactification of $\OM$? Answering this question 
for general $\OM$ is intractable; one can try to give an answer for various special classes of $\OM$.
Arosio et.\,al \cite[Corollary~4.5, Theorem~5.3]{AFGG} proved that for the following three classes of domains the Euclidean compactification, 
horofunction compactification, and Gromov compactification are the same: 
\begin{itemize}
\item Bounded strongly pseudoconvex domains with $\mathcal{C}^2$-boundary. 
\smallskip

\item Bounded convex domains in $\C^d$ with $\mathcal{C}^{\infty}$ smooth boundary of finite type in the 
sense of D'Angelo. 
\smallskip

\item Bounded convex domains $\OM$ for which the squeezing function $s_{\OM}(z)$
satisfies $$\lim_{z\to\bdy\OM}s_{\OM}(z)=1.$$ 
\end{itemize}
Here, the important point is that the above three classes
of domains $\OM$ are {\em Gromov model domains}, i.e., 
$(\OM, \koba_\OM)$ is Gromov hyperbolic and the Gromov boundary and the Euclidean boundary are 
identifiable. Our main 
motivation in this article is to produce a class of domains that are not Gromov hyperbolic but 
such that one can 
still identify the Euclidean boundary with a {\em part} of the horofunction boundary.
In what follows, given $\OM$ as above (not necessarily bounded), we shall denote by $\clos{\OM}^H$
the horofunction compactification of, and in case $(\OM,\koba_\OM)$ is Gromov hyperbolic, 
by $\clos{\OM}^G$ the Gromov compactification of, $(\OM,\koba_\OM)$. 
The Gromov boundary (in the case of Gromov hyperbolicity) and the horofunction boundary will be denoted by 
$\bdy\clos{\OM}^G$ and $\bdy\clos{\OM}^H$ respectively. The reader is referred to 
Section~\ref{S:Prelims} for a quick introduction to these objects. 
\smallskip

The class of domains that we are interested in is as follows:
\begin{definition}
A (possibly unbounded) hyperbolic domain $\OM\subset\C^d$ is
said to be a {\em local Gromov model domain} if, for every $\xi\in\bdy\OM$, there exists a 
bounded neighbourhood $U$ of $\xi$ such that $U\cap\OM$ is connected, complete Kobayashi 
hyperbolic, Gromov hyperbolic relative to its Kobayashi distance, and such that 
$\mathsf{id}_{U\cap\OM}$ extends to a homeomorphism from $\clos{U\cap\OM}^G$ to $\clos{U\cap\OM}$.
In other words, every boundary point $\xi\in\bdy\OM$ has a bounded 
neighbourhood $U$ such that $U\cap\OM$ is a {\em Gromov model domain}, to use the 
terminology of \cite[Definition~1.3]{BGNT2022}.
\end{definition}

\begin{remark} \label{rem:unb_loc_Grom_mod_not_Grom_hyp}
In \cite[Corollary~1.7]{BGNT2022} it is proved that if $\OM$ is a {\em bounded}
local Gromov model domain then it is a Gromov model domain. 
However, there exist unbounded local Gromov model domains
in dimension one that are not Gromov hyperbolic (and hence are not Gromov model
domains)\,---\,see, for example, \cite[Proposition~9.1]{BZ2023}. It is also
easy to construct unbounded domains in $\C^d$, $d\ge 2$, 
with smooth boundary that are
local Gromov model domains but not Gromov hyperbolic; such a construction is 
presented in brief at the beginning of \Cref{S:proofs}.
\end{remark}

For a bounded Gromov model domain $\OM$, given $x\in\bdy \OM$ and two geodesic rays
$\gamma$, $\sigma$ landing at $x$ (i.e., $\lim_{t\to\infty}\gamma(t)=x=\lim_{t\to\infty}\sigma(t)$), 
since $x$ represents a point in the Gromov boundary, by definition (see Section~\ref{S:Prelims}), 
$\gamma$ and $\sigma$ are asymptotic, i.e.,
\[ \sup_{t\in [0,\infty)} \koba_\OM(\gamma(t),\sigma(t)) < \infty. \]
Our first result says that an analogous result holds for local Gromov model domains. 

\begin{proposition} \label{prp:loc_Grom_mod_asymp}
Suppose that $\OM\subset\C^d$ is a Kobayashi hyperbolic 
local Gromov model domain that is 
hyperbolically embedded in $\C^d$, and $\gamma,\sigma$ are 
$\koba_\OM$-geodesic rays that land at the 
boundary point $\xi\in\bdy\OM$. Then $\gamma$ and $\sigma$ are 
$\koba_\OM$-asymptotic, i.e., 
\[ \sup_{t\in [0,\infty)} \koba_\OM(\gamma(t),\sigma(t)) < \infty. \]
\end{proposition}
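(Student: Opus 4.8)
The plan is to localize the problem near $\xi$ and then invoke the Gromov hyperbolicity of the local model $U \cap \OM$. First I would fix a bounded neighbourhood $U$ of $\xi$ witnessing the local Gromov model property, so that $U \cap \OM$ is complete Kobayashi hyperbolic, Gromov hyperbolic, and $\mathsf{id}_{U \cap \OM}$ extends to a homeomorphism $\clos{U \cap \OM}^G \to \clos{U \cap \OM}$. Since $\gamma$ and $\sigma$ land at $\xi$, there is $T \ge 0$ such that $\gamma(t), \sigma(t) \in U \cap \OM$ for all $t \ge T$; replacing the rays by their tails $\gamma(\cdot + T)$, $\sigma(\cdot + T)$ and using that a bounded shift does not affect the asymptotic condition, I may assume $\gamma, \sigma$ take values in $U \cap \OM$ throughout.

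The key difficulty is that $\gamma$ and $\sigma$ are geodesics for $\koba_\OM$, not for $\koba_{U \cap \OM}$, so I cannot directly treat them as geodesic rays in the Gromov hyperbolic space $(U \cap \OM, \koba_{U \cap \OM})$. The way around this is a quasi-isometry / localization estimate: because $\OM$ is hyperbolically embedded in $\C^d$ and $U$ is a fixed neighbourhood of $\xi$, the distances $\koba_\OM$ and $\koba_{U \cap \OM}$ are comparable on points lying deep inside $U \cap \OM$ near $\xi$ — more precisely, there are constants so that on a slightly smaller neighbourhood $V \Subset U$ of $\xi$ one has an estimate of the form $\koba_{U\cap\OM}(z,w) \le \koba_\OM(z,w) + C$ for $z,w \in V \cap \OM$ (the reverse inequality $\koba_\OM \le \koba_{U \cap \OM}$ is automatic by monotonicity). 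This is the standard localization phenomenon for the Kobayashi distance; I expect it to follow from hyperbolic embeddedness together with a Royden-type estimate, possibly already available in the literature cited by the paper. Consequently the tails of $\gamma$ and $\sigma$, once they enter $V \cap \OM$, are $(1,C)$-quasi-geodesic rays in $(U \cap \OM, \koba_{U\cap\OM})$.

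With that in hand, the remaining step is soft. In the Gromov hyperbolic space $(U \cap \OM, \koba_{U\cap\OM})$, the two quasi-geodesic rays both converge, in the Euclidean topology hence (via the given homeomorphism $\clos{U\cap\OM} \cong \clos{U\cap\OM}^G$) in the Gromov topology, to the same Gromov boundary point corresponding to $\xi$. By the Morse stability lemma for quasi-geodesics in Gromov hyperbolic spaces, together with the fact that two geodesic (or quasi-geodesic) rays representing the same point of the Gromov boundary stay within bounded Hausdorff distance, we get $\sup_{t \ge T'} \koba_{U\cap\OM}(\gamma(t), \sigma(t)) < \infty$ after a further bounded reparametrization; and since $\koba_\OM \le \koba_{U\cap\OM}$, the same supremum bound holds for $\koba_\OM$. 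Finally the finitely many values $t \in [0, T']$ contribute a finite supremum because $\koba_\OM(\gamma(t), \sigma(t))$ is continuous in $t$ on a compact interval. Combining the two ranges gives $\sup_{t \in [0,\infty)} \koba_\OM(\gamma(t), \sigma(t)) < \infty$, as desired. The main obstacle is cleanly establishing the localization inequality $\koba_{U\cap\OM} \le \koba_\OM + C$ near $\xi$ and checking that the hypotheses (hyperbolic embeddedness, boundedness of $U$) are exactly what is needed for it.
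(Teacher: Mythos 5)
Your overall architecture coincides with the paper's: localize near $\xi$, compare $\koba_\OM$ with $\koba_{U\cap\OM}$ up to an additive constant so that the tails of $\gamma,\sigma$ become $(1,C)$-quasi-geodesic rays in the Gromov hyperbolic model $(U\cap\OM,\koba_{U\cap\OM})$, use the identification $\clos{U\cap\OM}^G\cong\clos{U\cap\OM}$ to see that both rays determine the same Gromov boundary point, bound $\koba_{U\cap\OM}(\gamma(t),\sigma(t))$ by Morse stability, and transfer back via $\koba_\OM\le\koba_{U\cap\OM}$. The endgame you sketch (Morse lemma plus ``same boundary point implies bounded distance along matching parameters'') is exactly what the paper outsources to \cite[Lemma~5.8]{AFGG}, and your reduction of the parameter shift is harmless.

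The genuine soft spot is the step you yourself flag: the localization inequality $\koba_{U\cap\OM}(z,w)\le\koba_\OM(z,w)+C$ for $z,w$ in a smaller neighbourhood $W\Subset U$ of $\xi$. Your proposed mechanism\,---\,hyperbolic embeddedness plus a Royden-type estimate\,---\,does not deliver it: Royden localization is an infinitesimal (pointwise, multiplicative) comparison of the metrics, and to integrate it into a distance-level comparison you must know that $\koba_\OM$-geodesics (or almost-length-minimizing paths) joining two points of $W\cap\OM$ stay inside $U$; hyperbolic embeddedness of $\OM$ in $\C^d$ gives no such control, and in general such paths can exit $U$. The input that does the work in the paper is different and is available precisely because $U\cap\OM$ is a Gromov model domain: such a domain has the (weak) visibility property with respect to $\koba_{U\cap\OM}$ (see \cite[Theorem~2.6]{ZimCharDomLimAut} and \cite[Corollary~3.2]{CMS2023}), and then the localization theorem \cite[Theorem~1]{ADS2023} yields exactly $\koba_\OM\le\koba_{U\cap\OM}\le\koba_\OM+C$ on $W\cap\OM$. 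So the inequality you need is indeed in the literature, but its hypothesis is visibility of the local model, not hyperbolic embeddedness; with that citation inserted, the rest of your argument goes through and is essentially the paper's proof.
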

\noindent Here, $\OM$ is said to be {\em hyperbolically embedded}
in $\C^d$ if for any $p\neq q\in\bdy\OM$, 
\[
\liminf_{(x, y)\to(p, q)}\koba_\OM(x, y)>0.
\]
This is not how the notion of being hyperbolically embedded is usually defined in the
literature, but is easily seen to be equivalent to the standard definition
(see, e.g., \cite[Definition~1.3]{Rumpa_2025_Vis_mani}). We use the definition above
because it matches exactly with the {\em boundary separation property} as defined and
used in \cite{CGMS2024} (see \cite[Subsection~2.4]{CGMS2024}), and which, as we just
observed, coincides with the notion being discussed.
Note that every bounded domain is hyperbolically embedded in $\C^d$. This follows,
for example, from \cite[Proposition~3.5]{Bharali_Zimmer} and the fact that the Kobayashi
distance is the integrated form of the Kobayashi--Royden metric (see 
\cite[Lemma~2.8]{CGMS2024} and the following discussion). It is also not difficult
to see that $\OM$ is hyperbolically embedded in $\C^d$ if and only if it is
{\em hyperbolic at each boundary point} (for a definition of this latter term, and
for the equivalence, see, for instance, \cite[Definition~2.1, Proposition~2.2]{Nik_Okt_Tho}).
\smallskip

Given a geodesic ray $\gamma$ in an appropriate distance space
$(X, d)$, there is a well-known construction that associates to 
$\gamma$ a point in the horofunction boundary 
of $(X, d)$. This is via the Busemann function associated 
to $\gamma$, $B_\gamma$; see Section~\ref{S:Prelims} for more details.
A basic question is: given
geodesic rays $\gamma$ and $\sigma$, when do $B_\gamma$ and $B_\sigma$ determine the 
same point of the
horofunction boundary? A natural condition explored in \cite{AFGG} is the notion of 
{\em strong asymptoticity} (see \cite[Definition~3.1]{AFGG}).  
Two $\koba_\OM$-geodesic-rays $\sigma, \gamma$ are said to be
{\em strongly asymptotic} if they satisfy 
\[ \exists\,T\in\R,\; \lim_{t\to\infty} \koba_\OM(\gamma(t),\sigma(t+T))=0. \]
Arosio et.\,al.~in \cite{AFGG} proved that for a bounded domain $\OM$ 
satisfying $\lim_{z\to\bdy\OM}s_{\OM}(z)=1$, where $s_{\OM}(z)$ denotes the squeezing function, 
any two asymptotic geodesic rays that land at the same boundary
point are strongly asymptotic. Our second result says that
an analogous result also holds for local Gromov model domains. 

\begin{theorem} \label{thm:loc_Grom_hyp_then_str_asymp}
Suppose that $\OM\subset\C^d$ is a Kobayashi hyperbolic (not necessarily bounded)
local Gromov model domain that is hyperbolically
embedded in $\C^d$. Suppose that $\OM$ admits squeezing functions, and that
the squeezing function tends to $1$ at the boundary, i.e., $\forall\,\xi\in\bdy\OM$,
$\lim_{\OM\ni z\to\xi} s_\OM(z)=1$. Then, for every $\xi\in\bdy\OM$ and every two 
$\koba_\OM$-geodesic-rays $\gamma,\sigma$ in $\OM$ landing at $\xi$, $\gamma$ and $\sigma$ are 
strongly asymptotic.
\end{theorem}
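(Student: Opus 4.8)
The plan is to reduce, via Proposition~\ref{prp:loc_Grom_mod_asymp}, to the case of two \emph{asymptotic} $\koba_\OM$-geodesic rays landing at $\xi$, and then to run a recentring argument in the spirit of \cite{AFGG}; the point is that the squeezing hypothesis is used only near $\xi$, so the argument localises. Concretely, since $\OM$ is a hyperbolically embedded local Gromov model domain and $\gamma,\sigma$ land at the same boundary point, Proposition~\ref{prp:loc_Grom_mod_asymp} gives $M:=1+\sup_{t\ge0}\koba_\OM(\gamma(t),\sigma(t))<\infty$, whence $\koba_\OM(\gamma(t),\sigma(t+s))\le|s|+M-1$ for all admissible $t,s$. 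The goal is to produce $T\in\R$ with $\koba_\OM(\gamma(t),\sigma(t+T))\to0$; it will emerge that the forced value is $T=-B_\sigma(\gamma(0))$, where $B_\gamma,B_\sigma$ denote the Busemann functions (which exist because $t\mapsto\koba_\OM(x,\gamma(t))-t$ and $t\mapsto\koba_\OM(x,\sigma(t))-t$ are non-increasing).

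Next I would recentre at the ball. For $z$ near $\xi$ fix a squeezing embedding $F_z\colon\OM\hookrightarrow\ball^d$ with $F_z(z)=0$ and $B(0,s_\OM(z))\subseteq F_z(\OM)\subseteq\ball^d$, and set $\OM_n:=F_{\gamma(n)}(\OM)$. Since $\gamma(n)\to\xi$ we have $s_\OM(\gamma(n))\to1$, so $B(0,s_n)\subseteq\OM_n\subseteq\ball^d$ with $s_n\to1$; sandwiching $\koba_{\OM_n}$ between $\koba_{\ball^d}$ and $\koba_{B(0,s_n)}$ and using the explicit formula for the ball gives $\koba_{\OM_n}\to\koba_{\ball^d}$ uniformly on compact subsets of $\ball^d\times\ball^d$. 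The curves $\gamma_n(s):=F_{\gamma(n)}(\gamma(n+s))$ and $\sigma_n(s):=F_{\gamma(n)}(\sigma(n+s))$ are $\koba_{\OM_n}$-geodesics with $\gamma_n(0)=0$, with $\koba_{\ball^d}(0,\gamma_n(s))\le|s|$ and $\koba_{\ball^d}(0,\sigma_n(s))\le|s|+M-1$, and with $\koba_{\ball^d}$-Lipschitz constant $1$; so on each window $[-R,R]$ they lie, for large $n$, in a fixed compact of $\ball^d$ and are equicontinuous there. By Arzelà--Ascoli and a diagonal argument there is a subsequence $n_k$ along which $\gamma_{n_k}\to L_\gamma$ and $\sigma_{n_k}\to L_\sigma$ locally uniformly on $\R$, and then, using $\koba_{\OM_{n_k}}\to\koba_{\ball^d}$, one checks that $L_\gamma,L_\sigma$ are unit-speed $\koba_{\ball^d}$-geodesic lines with $\koba_{\ball^d}(L_\gamma(s),L_\sigma(s))=\lim_k\koba_\OM(\gamma(n_k+s),\sigma(n_k+s))\le M-1$ for every $s\in\R$.

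The last step exploits the negative curvature of $(\ball^d,\koba_{\ball^d})$: two geodesic lines of a strictly negatively curved Hadamard manifold that stay within bounded distance must have the same trace (the convex function $s\mapsto\koba_{\ball^d}(L_\gamma(s),L_\sigma(s))$ is then constant, and a flat strip is excluded), so $L_\sigma(s)=L_\gamma(s+c)$ for some $c$ with $|c|\le M-1$. Unwinding this through the metric-comparison estimate yields $\koba_\OM(\gamma(n_k+s+c),\sigma(n_k+s))\to0$ for each fixed $s$; taking Busemann limits in $\koba_\OM(x,\gamma(n_k+c))=\koba_\OM(x,\sigma(n_k))+o(1)$ forces $B_\gamma=B_\sigma-c$, so $c=B_\sigma(\gamma(0))$ is independent of the subsequence. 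Finally, if $\koba_\OM(\gamma(t),\sigma(t-c))\not\to0$, pick $t_n\to\infty$ along which it stays $\ge\varepsilon_0>0$ and re-run the preceding steps with recentring parameters $t_n-c$: this produces a sub-sequence along which $\koba_\OM(\gamma(t_{n_j}),\sigma(t_{n_j}-c))\to0$, a contradiction. Hence $\koba_\OM(\gamma(t),\sigma(t-c))\to0$, i.e.\ $\gamma$ and $\sigma$ are strongly asymptotic with $T=-c$.

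The main obstacle is the recentring step: one must transport \emph{both} rays by the \emph{same} map $F_{\gamma(n)}$, keep $\sigma_n$ inside a fixed compact over a \emph{growing} parameter window (this is exactly where the asymptoticity coming from Proposition~\ref{prp:loc_Grom_mod_asymp} is essential), and make sure the limiting curves are genuine geodesic \emph{lines} rather than merely long segments, so that the ``bounded distance forces equal trace'' principle applies. Once these compactness matters are handled, the passage from the subsequential conclusion to a single shift $c$ and a full limit, together with the standard facts used about $(\ball^d,\koba_{\ball^d})$, is routine.
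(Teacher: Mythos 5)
Your proposal is correct and follows essentially the same route as the paper: first reduce to $\koba_\OM$-asymptoticity of $\gamma$ and $\sigma$ via Proposition~\ref{prp:loc_Grom_mod_asymp} (which is exactly where the local Gromov model and hyperbolic-embeddedness hypotheses are used), and then run the squeezing-function rescaling argument near $\xi$. The difference is only presentational: the paper at this point invokes the proof of \cite[Theorem~4.3]{AFGG} verbatim, whereas you reconstruct that argument yourself (rescaled geodesics converging to geodesic lines in $\ball^d$, negative-curvature/flat-strip rigidity forcing a common trace, and a Busemann-function identification of the shift $c$ to pass from subsequential to full convergence), which is a sound variant of the cited argument.
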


Before we state the next result, we wish to remind the reader of the so-called
{\em visibility property with respect to the Kobayashi distance}. The notion of
visibility has been studied intensively in recent years; see, for example,
\cite{Bharali_Zimmer}, \cite{Bharali_Maitra}, \cite{BNT2022}, \cite{CMS2023},
\cite{BGNT2022}, \cite{ADS2023}, \cite{Nik_Okt_Tho}, \cite{Banik},
\cite{Nik_Okt_Tho_Visib_C2_pscvx}, \cite{Rumpa_2025_taut}, and \cite{Rumpa_2025_Vis_mani}.
In this short note, we will not recall the definition of this property, but rather
refer the reader to \cite[{Sections~1\,\&\,{}3}]{CMS2023} for the
definitions and a brief discussion of the various types of visibility.
\smallskip

We are now in a position to state the main result of this paper, which 
provides a sufficient condition under which the natural map from $\OM$ to
$\clos{\OM}^H$ extends continuously to $\clos{\OM}$.

\begin{theorem}\label{thm:strongly_asymp_then_cont_xtnsn}
Let $\Omega \subset \C^d$ be a complete Kobayashi hyperbolic domain that has
the geodesic visibility property and satisfies:  
for every $\xi \in \bdy\Omega$, any two geodesic rays $\gamma$ and $\sigma$ in 
$(\Omega,\koba_\Omega)$ landing at $\xi$ are strongly asymptotic 
(with respect to $\koba_\Omega$).  
Then the map $\Psi:\clos{\Omega}\to \clos{\Omega}^H$ defined by
\[
\Psi(\xi) \defeq 
\begin{cases}
[\koba_\Omega(\,\cdot\,,\xi)], & \text{if } \xi\in\Omega,\\[4pt]
[B_\gamma], & \text{if } \xi\in\bdy\Omega \text{ and } \gamma 
\text{ is a geodesic ray in } (\Omega,\koba_\Omega) 
\text{ landing at } \xi,
\end{cases}
\]
is well defined and embeds $\clos{\Omega}$ into $\clos{\Omega}^H$.
\end{theorem}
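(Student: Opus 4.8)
The plan is to verify three things in turn: that $\Psi$ is well defined on $\bdy\Omega$, that it is injective, and that it is a homeomorphism onto its image (equivalently, since $\clos{\Omega}$ is compact, that it is continuous). For well-definedness, suppose $\gamma$ and $\sigma$ are two geodesic rays landing at the same $\xi\in\bdy\Omega$. By hypothesis they are strongly asymptotic, so there is $T\in\R$ with $\koba_\Omega(\gamma(t),\sigma(t+T))\to 0$. I would then recall from Section~\ref{S:Prelims} the basic fact that strongly asymptotic geodesic rays have Busemann functions that agree up to an additive constant, hence define the same point of $\clos{\Omega}^H$; concretely, $B_\gamma(z)=\lim_t(\koba_\Omega(z,\gamma(t))-t)$ and $B_\sigma(z)=\lim_t(\koba_\Omega(z,\sigma(t+T))-(t+T))=\lim_t(\koba_\Omega(z,\sigma(t+T))-t)-T$, and the triangle inequality together with $\koba_\Omega(\gamma(t),\sigma(t+T))\to 0$ forces $|B_\gamma(z)-(B_\sigma(z)+T)|\to 0$, so $[B_\gamma]=[B_\sigma]$ in the horofunction compactification. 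One should also check that $\Psi(\Omega)\cap\Psi(\bdy\Omega)=\varnothing$: a Busemann function associated to a geodesic ray is not of the form $\koba_\Omega(\,\cdot\,,x)+c$ for any $x\in\Omega$, because $\koba_\Omega(\,\cdot\,,x)$ attains its minimum at $x$ while $B_\gamma$ does not attain a minimum (it decreases to $-\infty$ along $\gamma$, since $B_\gamma(\gamma(s))=-s$).

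Next, injectivity on $\bdy\Omega$. Let $\xi\ne\eta$ be distinct boundary points with geodesic rays $\gamma\to\xi$ and $\sigma\to\eta$. This is exactly where the \textbf{geodesic visibility property} enters: visibility provides a geodesic line $\lambda$ joining $\xi$ and $\eta$ (i.e., a geodesic $\lambda:\R\to\Omega$ with $\lambda(t)\to\xi$ as $t\to-\infty$ and $\lambda(t)\to\eta$ as $t\to+\infty$), and moreover the relevant visibility statements ensure that geodesic rays landing at a common boundary point stay within bounded Kobayashi distance — in fact, since $\Omega$ has the geodesic visibility property, any geodesic ray landing at $\xi$ is asymptotic (up to reparametrisation) to the corresponding end of $\lambda$. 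I would use this to reduce to comparing $B_\gamma$ with the Busemann function of the two tails of the connecting geodesic $\lambda$, and then argue that if $[B_\gamma]=[B_\sigma]$ then $B_\gamma=B_\sigma+c$ on all of $\Omega$; evaluating along $\lambda$ yields $B_\gamma(\lambda(t))+B_\sigma(\lambda(t))$ bounded below while, by the visibility/asymptotic estimates, $B_\gamma(\lambda(t))\sim -|t|+O(1)$ near $-\infty$ and $\sim +|t|+O(1)$ only if the tails disagree — the contradiction being that a two-sided geodesic line forces $B_\gamma+B_\sigma\to-\infty$ along $\lambda$ unless... more carefully: $B_\gamma(\lambda(s))+B_\sigma(\lambda(s))\ge \mathrm{const}$ always (this is the standard lower bound for a sum of two horofunctions along a geodesic connecting the two "centres"), whereas if $\gamma,\sigma$ determined the same horofunction the left side would equal $2B_\gamma(\lambda(s))+c\to-\infty$ as $s\to-\infty$, a contradiction. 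Hence $\Psi$ is injective.

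Finally, continuity. Since $\clos{\Omega}$ is compact and $\clos{\Omega}^H$ is Hausdorff, a continuous injection is automatically an embedding, so it suffices to show $\Psi$ is continuous at every point. Continuity at interior points is the standard fact that $x\mapsto[\koba_\Omega(\,\cdot\,,x)]$ is a topological embedding of $\Omega$ into $\clos{\Omega}^H$ and that for $x_n\to\xi\in\bdy\Omega$ (in the Euclidean sense) with $x_n$ eventually escaping every compact set, any horofunction-limit of $\koba_\Omega(\,\cdot\,,x_n)-\koba_\Omega(z_0,x_n)$ is a Busemann function of a geodesic ray landing at $\xi$ — here I would pick geodesic segments $[z_0,x_n]$, extract a subsequential limit geodesic ray $\gamma$ using completeness and an Arzel\`a--Ascoli argument, note $\gamma$ lands at $\xi$ by the visibility property (limits of geodesics to $x_n\to\xi$ land at $\xi$), and identify the horofunction limit with $[B_\gamma]$, which by well-definedness equals $\Psi(\xi)$ regardless of the chosen subsequence; this gives sequential continuity, and since the horofunction compactification of a proper geodesic space is metrizable, sequential continuity suffices. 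Continuity at boundary points is handled by a similar but slightly more delicate diagonal argument: if $\xi_n\to\xi$ in $\bdy\Omega$ with geodesic rays $\gamma_n\to\xi_n$, one chooses points $p_n=\gamma_n(t_n)$ far out along $\gamma_n$ and close to $\xi_n$, reduces to the interior case applied to $p_n\to\xi$, and uses that $[\koba_\Omega(\,\cdot\,,p_n)]$ is close to $[B_{\gamma_n}]=\Psi(\xi_n)$ in the horofunction compactification (because $\koba_\Omega(z,\gamma_n(t_n))-t_n\to B_{\gamma_n}(z)$ uniformly on compacta). I expect \textbf{the main obstacle} to be precisely this last continuity-at-the-boundary step — organising the double limit so that the visibility property can be invoked to guarantee that all the emergent geodesic rays land where they should, and handling the reparametrisation constants $T$ coming from strong asymptoticity uniformly enough that the horofunction limits land at the single point $\Psi(\xi)$.
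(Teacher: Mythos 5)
There is a genuine gap at the final step. You invoke ``$\clos{\Omega}$ is compact, so a continuous injection into a Hausdorff space is an embedding'', but the theorem is stated for possibly \emph{unbounded} $\Omega$ -- indeed, admitting unbounded, non-Gromov-hyperbolic domains is the whole point of the paper -- and for unbounded $\Omega$ the Euclidean closure $\clos{\Omega}$ is not compact. So even after continuity and injectivity are established, continuity of $\Psi^{-1}$ on the image must be proved separately. The paper does this directly: it shows that $\Psi(x_n)\to\Psi(y)$ forces $x_n\to y$, using sequential compactness of the end compactification $\clos{\Omega}^{End}$ (valid for complete hyperbolic visibility domains), approximating boundary terms by interior sequences, and then combining \Cref{res:if_conv_same_horo_then_Grom_equiv} (two interior sequences converging to the same horofunction boundary point have Gromov products tending to $\infty$) with the standard visibility fact that Gromov products of sequences converging to \emph{distinct} boundary points stay bounded. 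Your geodesic-line argument for injectivity on $\bdy\Omega$ is fine in spirit (granting the hypothesis, the two tails of a line $\lambda$ joining $\xi\ne\eta$ are strongly asymptotic to $\gamma$ and $\sigma$, so $B_\gamma(\lambda(s))=s+c$ and $B_\sigma(\lambda(s))=-s+c'$, and $[B_\gamma]=[B_\sigma]$ is immediately absurd), provided you also justify the existence of such a line via visibility plus completeness; but this replaces only the injectivity portion of the paper's argument and does not supply the inverse continuity needed for an embedding.

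A second, smaller gap is in the continuity step, where you say one should ``identify the horofunction limit with $[B_\gamma]$'' for subsequential limits of $\koba_\Omega(\cdot\,,x_n)-\koba_\Omega(x_n,p)$. That identification is the technical core of the paper's proof and does not follow merely from the limit geodesic ray landing at $\xi$: geodesic segments from a single base point yield only the lower bound \eqref{eqn:lb_diff_def_horofunc1}/\eqref{eqn:lb_for_lim}. The matching upper bound requires a \emph{second} family of geodesic segments from $p$ to $x_n$, whose limit ray $\wt{\sigma}$ also lands at $\xi$, and the error term $2\koba_\Omega(\wt{\gamma}(t),\wt{\sigma}(t+T))$ in \eqref{eqn:ub_diff_def_horofunc1}/\eqref{eqn:ub_for_lim}, which tends to $0$ precisely because $\wt{\gamma}$ and $\wt{\sigma}$ are strongly asymptotic; without this squeeze, different subsequences could a priori produce different horofunctions over the same boundary point. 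Your well-definedness argument and the interior-versus-boundary separation of images are correct and match the paper (via \Cref{res:geod_str_asymp_Buse_eq}); and the diagonal argument you flag for boundary sequences $\xi_n\to\xi$ is indeed needed (the paper compresses it into the claim that interior sequences suffice), but it is routine compared with the two issues above.
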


\noindent
The above result is inspired by, and modeled on, \cite[Theorem~3.5]{AFGG}, but with key differences.  
While \cite[Theorem~3.5]{AFGG} concerns the embedding (indeed, the topological equivalence) of the 
Gromov compactification of a proper geodesic Gromov hyperbolic space into its horofunction 
compactification, the theorem above establishes an embedding of the closure of a domain 
in complex Euclidean space into its horofunction compactification relative to $\koba_\Omega$.  
In place of Gromov hyperbolicity, we assume the geodesic visibility property, which is more 
natural in the present setting.
\smallskip

We would like to present a simple example illustrating the fact that, in the context of the theorem
just stated, the natural map from $\OM$ to $\clos{\OM}^H$ (namely, $x\mapsto 
[\koba_\OM(\cdot,x)]$) {\em does not}, in general, 
extend continuously to $\clos{\OM}$, the Euclidean closure
of $\OM$. Take $\OM\defeq \unitdisk\setminus [0,1]$, the unit disk with the line segment from
$0$ to $1$ removed. It is not hard to see that, for every $p\in (0,1]$, $[\koba_\OM(\cdot,x)]$
has no limit as $\OM\ni x\to p$. This follows from three things: the classical behaviour of a 
Riemann map from $\unitdisk$ to $\OM$ (see, for instance, 
\cite[Proposition~2.5, Theorem~2.6]{Pomme});
the fact that any Riemann map is an isometry for the hyperbolic (equivalently, Kobayashi)
distance; and the standard
fact that $\clos{\unitdisk}^H$ is naturally homeomorphic to $\clos{\unitdisk}$.
\smallskip

Finally we present a class of planar hyperbolic domains, introduced recently in \cite{CGMS2024},
to which the above theorem applies.
These are local Gromov model domains; in particular, this class contains hyperbolic domains
that are not Gromov hyperbolic with respect to the Kobayashi distance, and domains whose 
boundaries could be very irregular (see \cite[Section~6]{CGMS2024}).

\begin{definition}\label{Def:cond1} A 
hyperbolic domain $\OM\subset\C$ is said to satisfy Condition~1 (or to be a Condition~1 domain)
if for any point $p\in\bdy \OM$, there exist an $r>0$ and
a topological embedding $\tau_p:\clos{\unitdisk}\lraw\clos{\OM}$ such that
$\tau_p(\unitdisk)\subset\OM$ and $D(p, r)\cap\OM\subset\tau_p(\unitdisk)$. 
\end{definition}

In the above, $D(p,r)$ denotes the open disc of radius $r$ centred at $p$. It is easy to see 
why Condition~1 domains 
are local Gromov model domains: take a point $p\in U$,
and let $\tau_p$, $r>0$ be as in the above definition. Note that $\tau_p(\unitdisk)$ is a 
Jordan domain; therefore, appealing to the Riemann Mapping Theorem and Carath\'{e}odory's Extension
Theorem \cite[Theorem~4.3]{BCM}, we may assume that $\tau_p$ restricted to $\unitdisk$ 
is a biholomorphism onto $\tau_p(\unitdisk)$ that extends to a homeomorphism from $\clos{\unitdisk}$ 
to $\clos{\tau_p(\unitdisk)}$. 
Clearly, $\tau_p(\unitdisk)$ equipped with its Kobayashi distance is a Gromov model domain.
\smallskip

Condition~1 domains were also completely characterized in \cite{CGMS2024}: these are domains for 
which the boundary is locally connected and each 
boundary component is a Jordan curve in the Riemann sphere \cite[Proposition~4.15]{CGMS2024}.
They also possess the geodesic visibility property. Regarding their squeezing function, 
we prove the following result: 
\begin{proposition} \label{prp:sq_fn_tend_1_bdry_cond1}
Suppose that $\OM\subset\C$ is a domain that satisfies Condition 1. Then, for every $p\in\bdy\OM$,
$\lim_{z\to p}s_\OM(z)=1$.
\end{proposition}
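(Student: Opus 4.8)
The plan is to exploit the local structure given by Condition~1 to reduce the estimate to a well-understood model. Fix $p\in\bdy\OM$ and let $\tau_p\colon\clos{\unitdisk}\to\clos{\OM}$ and $r>0$ be as in Definition~\ref{Def:cond1}. As observed just after that definition, after composing with a Riemann map we may assume $\tau_p|_{\unitdisk}$ is a biholomorphism onto the Jordan domain $G\defeq\tau_p(\unitdisk)$, extending to a homeomorphism of the closures. The two key inclusions are $G\subset\OM$ and $D(p,r)\cap\OM\subset G$. The first gives, by the distance-decreasing property of the Kobayashi distance (equivalently, of the Kobayashi--Royden metric), that $\koba_\OM(z,w)\le\koba_G(z,w)$ for all $z,w\in G$; this is the inequality that produces the lower bound $s_\OM(z)\ge$ something in terms of a ball for $\koba_G$. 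The second inclusion lets one replace a neighbourhood of $p$ in $\OM$ by a neighbourhood of $p$ in $G$.

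The core of the argument is therefore to show $\lim_{z\to p}s_G(z)=1$ for a Jordan domain $G$, at a boundary point $p=\tau_p(\zeta)$, $\zeta\in\bdy\unitdisk$, and then to transfer this to $\OM$. For $s_G$: the squeezing function is a biholomorphic invariant, so $s_G(\tau_p(x))=s_{\unitdisk}(x)\equiv 1$ — wait, that is too strong to be the mechanism, since $\OM$ is not $G$. Instead I would argue directly on $\OM$ as follows. To estimate $s_\OM(z)$ from below for $z$ near $p$, one needs an embedding $f\colon\OM\to\ball^d=\unitdisk$ with $f(z)=0$ that is "almost onto" a large disk. Take $f$ to be (a Möbius normalization of) the inverse Riemann map $\tau_p^{-1}\colon G\to\unitdisk$ restricted to... no, $\tau_p^{-1}$ is only defined on $G$, not on $\OM$. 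The right approach: by the inclusion $G\subset\OM$ we only get an \emph{upper} bound on $\koba_\OM$, which is the wrong direction for bounding $s_\OM$ below via $G$.

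So the genuinely useful direction is: since $D(p,r)\cap\OM\subset G\subset\OM$, for $z\in D(p,r)\cap\OM$ and any radius $\rho$ with $D(z,\rho)\subset D(p,r)\cap\OM$ we have $D(z,\rho)\subset\OM$, hence the obvious embedding shows $s_\OM(z)\ge\rho/\operatorname{diam}$-type bounds — but these do not tend to $1$ in general. The fix, and the actual content, is to use the \emph{hyperbolic} (Kobayashi) geometry rather than the Euclidean geometry: I would show that as $z\to p$, a $\koba_\OM$-ball $B_{\koba_\OM}(z,R)$ of fixed large radius $R$ is eventually contained in $D(p,r)\cap\OM\subset G$, using that $\koba_\OM(z,w)\to\infty$ as $z\to p$ with $w$ fixed (a consequence of being hyperbolic at the boundary point $p$ — equivalently, hyperbolically embedded — which Condition~1 domains satisfy; cf.\ the discussion after \Cref{prp:loc_Grom_mod_asymp}). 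On that ball, $\koba_\OM$ and $\koba_G$ agree up to an error tending to $0$: indeed $\koba_\OM\le\koba_G$ on $G$ always, and the reverse localization estimate $\koba_G(z,w)\le\koba_\OM(z,w)+o(1)$ on $B_{\koba_\OM}(z,R)$ as $z\to p$ follows from a standard localization lemma for the Kobayashi distance near a boundary point where the domain coincides with a nice subdomain (this is exactly the kind of estimate underlying \cite[Proposition~3.5]{Bharali_Zimmer} and its analogues). Composing $\tau_p^{-1}\colon G\to\unitdisk$ with an automorphism of $\unitdisk$ sending $z$ to $0$ gives an embedding $\OM\supset B_{\koba_\OM}(z,R)\hookrightarrow\unitdisk$; because it is a $\koba_G$-isometry and $\koba_G\approx\koba_\OM$ on this ball, the image contains the $\koba_{\unitdisk}$-ball of radius $R-o(1)$ about $0$, which in Euclidean terms contains a Euclidean disk of radius $\tanh(R-o(1))\to\tanh R$. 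Letting $R\to\infty$ after $z\to p$ forces $s_\OM(z)\to 1$.

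The main obstacle — and the step needing the most care — is the localization estimate $\koba_G(z,w)\le\koba_\OM(z,w)+\varepsilon$ for $z,w$ near $p$: upper bounds on $\koba_G$ by $\koba_\OM$ are the "hard" direction and require controlling how geodesics (or near-geodesics) of $\OM$ between points near $p$ stay inside $D(p,r)\cap\OM\subset G$. Here I would invoke the geodesic visibility property of $\OM$ (valid for Condition~1 domains) together with hyperbolic embeddedness: a visibility argument shows that a $(1,\kappa)$-quasi-geodesic joining two points both very close to $p$ cannot exit a fixed neighbourhood of $p$ — otherwise it would have to "visit" a second boundary point, contradicting that both endpoints converge to $p$ — and once it is trapped inside $D(p,r)\cap\OM\subset G$, its $\koba_G$-length is controlled, giving the desired inequality with $\varepsilon\to 0$. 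Assembling these pieces — visibility to localize geodesics, the resulting two-sided comparison of $\koba_\OM$ and $\koba_G$ on large Kobayashi balls shrinking toward $p$, the conformal invariance of the squeezing function on $G$, and finally $R\to\infty$ — yields $\lim_{z\to p}s_\OM(z)=1$.
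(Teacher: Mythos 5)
There is a genuine gap, and it is structural. To bound $s_\OM(z)$ from below you must exhibit an \emph{injective holomorphic map defined on all of} $\OM$ into $\unitdisk$ sending $z$ to $0$ and whose image contains a large disk about $0$. Every candidate map in your argument --- $T_z\circ\tau_p^{-1}$, or its restriction to a large Kobayashi ball $B_{\koba_\OM}(z,R)\subset D(p,r)\cap\OM\subset G$ --- is defined only on the subdomain $G=\tau_p(\unitdisk)$ (you notice this obstacle yourself early on, but the pivot to Kobayashi balls does not remove it: the map is still only defined on a piece of $\OM$). Hence the conclusion ``the image contains a Euclidean disk of radius $\tanh(R-o(1))$, so $s_\OM(z)\ge\tanh(R-o(1))$'' does not follow: you never produce a valid competitor in the definition of $s_\OM(z)$, and there is no reason the partial map $\tau_p^{-1}$ extends injectively and holomorphically to the (possibly multiply connected) domain $\OM$. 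Secondarily, the localization inequality $\koba_G\le\koba_\OM+o(1)$ near $p$, sketched via visibility, is itself left unproved, but this is moot given the main gap.

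The paper's proof resolves exactly this point by going \emph{outward} rather than \emph{inward}: instead of the simply connected subdomain $G\subset\OM$, it uses the simply connected domain $\OM_\gamma\defeq\C_\infty\setminus\cl_{\C_\infty}(K_\gamma)\supset\OM$, where $K_\gamma$ is the component of $\C\setminus\OM$ containing $p$. By results from \cite{CGMS2024}, $\bdy\OM_\gamma$ is a Jordan curve and $\OM$ coincides with $\OM_\gamma$ in a neighbourhood $V$ of $p$; the Riemann map $\phi_\gamma:\OM_\gamma\to\unitdisk$ (extended by Carath\'eodory, normalized so $\phi_\gamma(p)=1$) restricts to a \emph{globally defined} injective holomorphic map on $\OM$. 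After transferring by biholomorphic invariance to $\wt{\OM}\defeq\phi_\gamma(\OM)\subset\unitdisk$, the estimate becomes elementary: since $\wt{\OM}$ coincides with $\unitdisk$ near $1$, the Poincar\'e distance $\koba_\unitdisk(z,\unitdisk\setminus\wt{\OM})\to\infty$ as $z\to1$, so composing with disk automorphisms $T_z$ shows $T_z(\wt{\OM})\supset D(0,r)$ for $z$ close to $1$, giving $s_{\wt\OM}(z)\ge r$. No visibility, localization of the Kobayashi distance, or comparison of $\koba_\OM$ with $\koba_G$ is needed. If you want to salvage your approach, replace $G$ by such a containing simply connected domain; as written, the argument does not prove the proposition.
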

As a consequence of Theorem~\ref{thm:strongly_asymp_then_cont_xtnsn} and 
Theorem~\ref{thm:loc_Grom_hyp_then_str_asymp}, we get the follwing result. 
\begin{corollary}\label{Cor:}
Let $\OM$ be a Condition~1 domain. Then the identity map extends as an embedding 
from $\clos{\OM}$ to $\clos{\OM}^{H}$. 
\end{corollary}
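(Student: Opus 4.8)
}
The plan is to verify, for a Condition~1 domain $\OM$, the hypotheses of Theorem~\ref{thm:strongly_asymp_then_cont_xtnsn}, and then to quote that theorem. So there are three things to check: that $\OM$ is complete Kobayashi hyperbolic; that $\OM$ has the geodesic visibility property; and that for every $\xi\in\bdy\OM$, any two $\koba_\OM$-geodesic rays landing at $\xi$ are strongly asymptotic.

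Completeness and visibility are essentially already recorded for Condition~1 domains in \cite{CGMS2024}: the excerpt notes explicitly that ``They also possess the geodesic visibility property,'' and visibility for a hyperbolic planar domain together with the characterization \cite[Proposition~4.15]{CGMS2024} (locally connected boundary, each boundary component a Jordan curve in the sphere) yields completeness of $(\OM,\koba_\OM)$; alternatively one observes that a Condition~1 domain is hyperbolic at each boundary point, hence complete hyperbolic by the standard criterion. So I would dispatch these two points by citation with a one-line justification. The substantive input is the strong asymptoticity of geodesic rays landing at a common boundary point, and this is exactly where Theorem~\ref{thm:loc_Grom_hyp_then_str_asymp} enters: a Condition~1 domain is a local Gromov model domain (this is argued in the paragraph following Definition~\ref{Def:cond1}), it is hyperbolically embedded in $\C$ (by Proposition~\ref{prp:sq_fn_tend_1_bdry_cond1} the squeezing function tends to $1$ at every boundary point, which in particular forces hyperbolicity at each boundary point, equivalently hyperbolic embeddedness), and it admits squeezing functions with $\lim_{z\to\xi}s_\OM(z)=1$ for every $\xi\in\bdy\OM$ again by Proposition~\ref{prp:sq_fn_tend_1_bdry_cond1}. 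Thus Theorem~\ref{thm:loc_Grom_hyp_then_str_asymp} applies verbatim and gives the strong asymptoticity of any two $\koba_\OM$-geodesic rays landing at the same boundary point.

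With all three hypotheses of Theorem~\ref{thm:strongly_asymp_then_cont_xtnsn} in hand, that theorem produces the map $\Psi:\clos{\OM}\to\clos{\OM}^H$, well defined and embedding $\clos{\OM}$ into $\clos{\OM}^H$; since $\Psi$ restricted to $\OM$ is the natural map $x\mapsto[\koba_\OM(\cdot,x)]$, which is precisely the canonical extension of $\mathsf{id}_\OM$ into the horofunction compactification, this says exactly that $\mathsf{id}_\OM$ extends to an embedding $\clos{\OM}\hookrightarrow\clos{\OM}^H$, as claimed.

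The only point requiring any care — and hence the ``main obstacle,'' such as it is — is making sure that hyperbolic embeddedness of $\OM$ in $\C$ genuinely follows from the squeezing function tending to $1$ at the boundary; I would handle this using the stated equivalence between hyperbolic embeddedness and being hyperbolic at each boundary point together with a comparison of $\koba_\OM$ against the Kobayashi distance on a Euclidean ball transported by a near-isometry coming from the squeezing map, so that boundary separation holds. Everything else is bookkeeping: the corollary is really just the conjunction of Theorem~\ref{thm:strongly_asymp_then_cont_xtnsn}, Theorem~\ref{thm:loc_Grom_hyp_then_str_asymp}, Proposition~\ref{prp:sq_fn_tend_1_bdry_cond1}, and the already-established facts that Condition~1 domains are local Gromov model domains with the geodesic visibility property.
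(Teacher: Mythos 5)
Your overall route is the same as the paper's: observe that a Condition~1 domain is a local Gromov model domain (the discussion following \Cref{Def:cond1}), feed \Cref{prp:sq_fn_tend_1_bdry_cond1} into \Cref{thm:loc_Grom_hyp_then_str_asymp} to get strong asymptoticity of any two geodesic rays landing at a common boundary point, and then invoke \Cref{thm:strongly_asymp_then_cont_xtnsn}. So structurally you are doing exactly what the paper does (the paper also quotes \Cref{prp:loc_Grom_mod_asymp} explicitly, but that is subsumed in the proof of \Cref{thm:loc_Grom_hyp_then_str_asymp}).

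The weak point is precisely the step you single out as the ``main obstacle'': your proposed derivation of hyperbolic embeddedness from $s_\OM(z)\to 1$ at the boundary does not work as sketched. The squeezing map $\phi_z$ gives a comparison of $\koba_\OM$ with $\koba_{\ball^d}$ only after transporting by $\phi_z$, and $\phi_z$ varies with $z$; nothing controls where $\phi_z$ sends points approaching a \emph{different} boundary point $q$, so no lower bound $\liminf_{(x,y)\to(p,q)}\koba_\OM(x,y)>0$ comes out of this ``near-isometry'' comparison (likewise, $s_\OM\to 1$ does not obviously yield the Euclidean lower bound on the Kobayashi metric that ``hyperbolic at each boundary point'' demands). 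Fortunately, in the planar setting the fact is elementary: since $p,q\in\bdy\OM$ are distinct finite points, $\OM\subset\C\setminus\{p,q\}$ and monotonicity of the Kobayashi distance gives $\koba_\OM\ge \koba_{\C\setminus\{p,q\}}$, whose behaviour near the punctures forces the required separation; alternatively, this boundary separation property for Condition~1 domains is available in \cite{CGMS2024}, which is what the paper implicitly relies on. Your justifications of the remaining hypotheses of \Cref{thm:strongly_asymp_then_cont_xtnsn} are also shaky as stated: visibility does \emph{not} imply Kobayashi completeness (cf.\ \cite{Rumpa_2025_taut}), and hyperbolicity at each boundary point does not either; the correct (and easy) reason here is that for a planar hyperbolic domain the Kobayashi distance coincides with the Poincar\'{e} distance, which is always complete, while geodesic visibility of Condition~1 domains is taken from \cite{CGMS2024}, as the paper does. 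With these repairs your argument coincides with the paper's proof.
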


\section{Preliminaries}\label{S:Prelims}
Let us recall the horofunction compactification and the horofunction boundary 
of a distance space. These notions were
introduced by Gromov in 1981 \cite{Gromov:1981} in the setting of ${\rm CAT(0)}$ spaces. 
Given a locally compact distance space $(X,d)$, we endow $C(X; \R)$, 
the space of real-valued continuous functions on $X$, with the topology of uniform convergence on compact 
sets. 
The strategy is to embed $X$ in $C(X; \R)$, as follows.
Choose and fix a point $p\in X$ and consider the mapping $\psi_p : X \to C(X; \R)$ given
by: $\forall\,x,y\in X,\;\psi_p(x)(y) \defeq d(x,y)-d(x,p)$. Note that, for every $x\in X$,
$\psi_p(x)$ is a continuous function on $X$ that vanishes at $p$; note also that $\psi_p$
itself is injective.
It follows easily from the Arzel{\`a}--Ascoli theorem
that $\psi_p(X)$ is a relatively compact subset of $C(X; \R)$; hence its closure $\clos{\psi_p(X)}$
in $C(X; \R)$ is compact. One could define $\clos{\psi_p(X)}$ to be the horofunction compactification
of $(X,d)$, but this would leave open the possibility that this compactification depends on $p$.
\smallskip

To remove this dependence on $p$, we proceed as follows. We let $A$ denote the set of constant
functions in $C(X,\R)$. Then $A$ is a closed subspace of $C(X,\R)$ and so $C_{*}(X,\R)\defeq
C(X,\R)/A$ is defined as a (quotient) topological vector space in its own right. For an arbitrary 
function $f\in C(X,\R)$, we let $[f]$ denote the image of $f$ under the natural projection from
$C(X,\R)$ onto $C_{*}(X,\R)$. Define $\tau: X\to C_{*}(X,\R)$ as follows: $\forall\,x\in X,$ $\tau(x)
\defeq [d(x,\cdot)]$. Here, $d(x, \cdot):X\lraw\R$ is the distance from $x$. 
If, for $p\in X$ arbitrary, we let $C_p(X,\R) \defeq \{f\in C(X,\R)\mid
f(p)=0\}$, then $C_p(X,\R)$ is a closed subspace of $C(X,\R)$. Consider the map $H_p:C(X,\R)\to
C_p(X,\R)$ given by $H_p(f)\defeq f-f(p)$; it is clear that $H_p$ is a continuous linear surjection
with kernel $A$. Thus, it establishes an isomorphism of topological vector spaces, say 
$\check{H}_p : C_*(X,\R) \to C_p(X,\R)$. Now note that the map $\psi_p$ defined above maps $X$
into $C_p(X,\R)$; in fact, as we will note below, in ``nice'' situations it is an {\em embedding} 
of $X$ into $C_p(X,\R)$. As mentioned above, $\clos{\psi_p(X)}$ is (a) compact (subset of $C_p(X,\R)$,
or, equivalently, of $C(X,\R)$). It makes sense to consider the map $\check{H}_p^{-1}\circ\psi_p:
X\to C_*(X,\R)$; it is (at least) a continuous injective map. The closure of its image is
$\clos{\check{H}_p^{-1}(\psi_p(X))}=\check{H}_p^{-1}(\clos{\psi_p(X)})$, which is a compact
subset of $C_*(X,\R)$. Furthermore, it is easy to check that for {\em any} $p\in X$,
$\check{H}_p^{-1}\circ\psi_p=\tau$. Therefore, for any $p,q\in X$, 
\[ \clos{\check{H}_p^{-1}\circ\psi_p(X)} = \clos{\check{H}_q^{-1}\circ\psi_q(X)} = \clos{\tau(X)}.\]
We define $\clos{\tau(X)}$ to be the {\em horofunction compactification} or {\em horocompactification}
of $(X,d)$, and we denote it by $\clos{X}^H$. 
We define $\clos{\tau(X)}\setminus
\tau(X)$ to be the {\em horofunction boundary} or {\em horoboundary} of $(X,d)$, and we denote it by
$\bdy^H X$. By a {\em horofunction} of $(X,d)$ we mean an 
element $h\in C(X,\R)$ such that $[h]\in \bdy^H X$. Given an element $\alpha\in\bdy^H X$ and given
$p\in X$, there exists a unique element $h\in C_p(X,\R)$ such that $[h]=\alpha$; this $h$ we denote by
$\alpha_p$.
We now state the following fact that is well-known in the literature; see e.g. 
\cite[Proposition~3.0.26]{Schilling}.
%(more precisely, of $(X,d)$). 
%Having checked that the definition of horofunction compactification is
%independent of the choice of base point, we largely return to calculating using a fixed but arbitrary
%base point; in effect this means calculating using $\psi_p$ instead of $\tau$. With such a choice
%of a fixed but arbitrary $p$, a horofunction becomes a particular kind of continuous function that
%vanishes at $p$, and the horoboundary a collection of such functions.

\begin{result} \label{prp:length-type_space_horofunc_embed}
Suppose that $(X,d)$ is a 
%locally compact, complete 
proper, geodesic distance space.
Then, for every $p\in X$, $\psi_p$ is an embedding of $X$ into $C(X,\R)$.
\end{result}

Let us now recall the Gromov compactification of a Gromov hyperbolic space.
Let $(X,d)$ be a proper, geodesic, Gromov hyperbolic metric space.  
A \emph{geodesic ray} in $X$ is an isometric embedding $[0,\infty)\to X$;  
a \emph{geodesic line} is an isometric embedding $\R\to X$; and  
a \emph{geodesic segment} (sometimes also referred to simply as a {\em geodesic})
is an isometric embedding $[a,b]\to X$ for $a,b\in\R$ with $a\le b$.
(It is also pertinent to recall the following definition: if $(X,d)$ is an arbitrary distance space
and if $\lambda\ge 1$ and $\kappa\ge 0$ are parameters, then by a $(\lambda,\kappa)$-quasi-geodesic
in $(X,d)$ we mean a map $\gamma:I\to X$, where $I\subset\R$ is an interval, such that
\[ \forall\,s,t\in I,\; (1/\lambda)|s-t|-\kappa \le \koba_{\OM}(\gamma(s),\gamma(t)) \le \lambda|s-t|+\kappa.)
\]
\smallskip

The \emph{Gromov boundary} $\bdy^G X$ is the set of equivalence classes of geodesic rays under the relation of being \emph{asymptotic}, denoted $\sim_G$, where
\[
\gamma \sim_G \sigma \iff \sup_{t\ge0} d(\gamma(t),\sigma(t))<\infty.
\]
If $X$ is proper, geodesic, and Gromov hyperbolic, the space
$\overline{X}^G := X \sqcup \bdy^G X$
admits a natural metrizable topology making it a compactification of $X$.  
Convergence in this topology is described as follows.
A sequence $(x_n)\subset X$ converges to a boundary point $\xi\in\bdy^G X$  
if there exists $p\in X$ such that, for every sequence of geodesic segments $\gamma_n$ joining $p$ to $x_n$, 
every subsequence of $(\gamma_n)$ admits a further subsequence converging uniformly on the
compact subsets of $[0,\infty)$ to a geodesic ray representing $\xi$.  
Similarly, a sequence $(\xi_n)\subset\bdy^G X$ converges to $\xi\in\bdy^G X$  
if there exists $p\in X$ such that, for any choice of geodesic rays $\gamma_n$ emanating from $p$ and 
representing $\xi_n$, every subsequence of $(\gamma_n)$ has a further subsequence converging uniformly on 
the compact subsets of $[0,\infty)$ to a ray representing $\xi$.
\smallskip

We now recall the Busemann function corresponding to a geodesic ray.
Suppose that $(X,d)$ is a proper, non-compact
(equivalently, unbounded)
geodesic distance space and $\gamma : [0,\infty) \to X$ is a geodesic ray in $(X,d)$.
Consider the family
\[ \Big( d\big(\cdot\,,\gamma(t)\big)-d\big(\gamma(t),\gamma(0)\big) \Big)_{t\ge 0} \]
of functions on $X$. Using the fact that $\gamma$ is a geodesic ray, it is easy to see that this
family of functions is pointwise decreasing and locally bounded from below. Therefore, by the
Arzel{\`a}--Ascoli theorem, the family converges, as $t\to\infty$, to an element
$B_{\gamma}(\cdot\,,\gamma(0))$ of $C(X,\R)$.
This element is a horofunction; indeed, returning to the terminology introduced above,
$B_\gamma(\cdot\,,\gamma(0)) = \lim_{t\to\infty} \psi_{\gamma(0)}(\gamma(t))$. 
In fact, using the observations made at the beginning of this section, it is not difficult to see that 
for $p\in X$ arbitrary, $\lim_{t\to\infty} \psi_{p}(\gamma(t))$ exists as
well; we denote it by $B_{\gamma}(\cdot\,,p)$. Thus, we have a function $B_\gamma : X\times X\to\R$
given by
\[ B_\gamma(x,y) = \lim_{t\to\infty} \Big( d\big(x,\gamma(t)\big)-d(\gamma(t),y) \Big); \]
this we call the {\em Busemann function} corresponding to the geodesic ray $\gamma$.
Given $x,y\in X$ and $t\in [0,\infty)$ arbitrary, note that 
$[\psi_x(\gamma(t))]=[\psi_y(\gamma(t))]$; taking the limit as $t\to\infty$, it follows that 
$[B_\gamma(\cdot\,,x)]=[B_{\gamma}(\cdot\,,y)]$; this common element of $C_{*}(X,\R)$ we denote by
$[B_\gamma]$. We now recall the following result alluded to in the Introduction. 

\begin{result}[{\cite[Proposition~3.3]{AFGG}}] \label{res:geod_str_asymp_Buse_eq}
Let $(X,d)$ be a proper geodesic distance space. If two geodesic rays $\gamma,\sigma$ in $X$
are strongly asymptotic, then $B_\gamma=B_\sigma$.
\end{result}

For domains possessing the geodesic visibility property, we have the following result (in what follows, 
given $\OM$, a possibly unbounded, complete Kobayashi hyperbolic domain, $\clos{\OM}^{End}$ denotes 
the {\em end compactification} of $\clos{\OM}$\,---\,see, for instance, \cite[Subsection~2.1]{CGMS2024}
for a quick introduction to this):
\begin{result} \label{res:id_extends_horo_to_end_visib}
Suppose that $\OM\subset\C^d$ is a complete Kobayashi hyperbolic domain that has the 
geodesic visibility property. Then the identity map $\mathsf{id}:\OM\to\OM$ extends 
to a continuous surjective map from $\clos{\OM}^H$ to $\clos{\OM}^{End}$. Moreover, 
given $\xi\in\bdy\clos{\OM}^{End}$, let
\[ 
 \mathscr{F}_{\xi} \defeq \{ \alpha \in C_{*}(\OM,\R) \mid \exists\, (x_n)\subset\OM
  \text{ such that } x_n\to\xi \text{ and } \tau(x_n)\to\alpha\}, 
 \]
where $\tau$ is as in the definition of horofunction compactification. 
Then, for every $\xi_1,\xi_2\in\bdy\clos{\OM}^{End}$ with $\xi_1\ne\xi_2$, $\mathscr{F}_{\xi_1}\cap 
\mathscr{F}_{\xi_2}=\emptyset$.
\end{result}
\noindent We skip the proof of this result here as it follows from 
more general results in the recent article \cite{CM:2025}. 
However, in our setting, we present the main idea: an important role is played 
by the following result. 
\begin{result}[{\cite[Corollary~4.3]{Web-Win-Boundaries-hyperbolic}}] 
\label{res:if_conv_same_horo_then_Grom_equiv}
Suppose $(X,d)$ is a proper, geodesic distance space. Then, for every $\alpha\in\clos{\tau(X)}\setminus
\tau(X)$, and every two sequences $(x_n)$ and $(y_n)$ in $X$ such that $(\tau(x_n))$ and $(\tau(y_n))$ 
both converge to $\alpha$, one has $\lim_{n,m\to\infty} (x_n|y_m)_o = \infty$.
\end{result}
Following the terminology in \cite[Definition~6.2]{Bharali_Zimmer}, the above result
essentially says that $\clos{X}^{H}$ is a {\em good compactification}.
The continuous extension of the identity map in 
\Cref{{res:id_extends_horo_to_end_visib}} then follows from ideas similar to those 
in the proof of \cite[Theorem~6.5]{Bharali_Zimmer}.
Alternatively, one could also appeal to 
\cite[Theorem~1.4]{CM:2025}. The second assertion in \Cref{res:id_extends_horo_to_end_visib} also follows from a similar argument
using the visibility property. Alternatively, one could also appeal to 
\cite[Theorem~1.8]{CM:2025}. 

\section{Proofs of Theorems}\label{S:proofs}
In this section, we will present the proofs of our main results. 
\smallskip

But we begin with the construction referred to in \Cref{rem:unb_loc_Grom_mod_not_Grom_hyp}. 
Consider a bounded convex domain $D\subset\C^d$ with smooth boundary that is of finite type
everywhere except at only one point. Note that, by 
\cite[Theorem~1.1]{ZimGromHypKobMetConvDomFinType}, $(D,\koba_{D})$ is not Gromov hyperbolic.
Now let the exceptional point be $0_{\C^d}$ and suppose
without loss of generality that the complex tangent space to $\bdy D$ at $0_{\C^d}$ is
$\{z_1=0\}$. Consider the biholomorphic map $F:\C^d\setminus\{z_1=0\}\to\C^d\setminus\{z_1=0\}$ 
given by $(z_1,z_2,\dots,z_d)\mapsto(1/z_1,z_2,\dots,z_d)$. Writing $\OM\defeq F(D)$, it is
clear that $\OM$ is an unbounded domain in $\C^d$ that is not Gromov hyperbolic relative to its
Kobayashi distance. Note that $F$ is defined as a biholomorphism in a neighbourhood of 
$\clos{D}\setminus \{0_{\C^d}\}$ and note also that $\bdy\OM=F(\bdy D\setminus\{0_{\C^d}\})$.
From these two facts and \cite[Theorem~1.3]{ZimGromHypKobMetConvDomFinType} it follows that $\OM$
is a local Gromov model domain. This concludes the construction; we now turn to the proofs of the
results.

\begin{proof}[Proof of {\Cref{prp:loc_Grom_mod_asymp}}]
Using the fact that $\OM$ is a local Gromov model domain, choose a bounded neighbourhood $U$ of 
$\xi$ such that $U\cap\OM$ is connected, complete hyperbolic, Gromov hyperbolic, and such that 
$\mathsf{id}_{U\cap\OM}$ extends to a homeomorphism from $\clos{U\cap\OM}^G$ to $\clos{U\cap\OM}$.
Also choose a neighbourhood $W$ of $\xi$ such that $W\Subset U$. Since $\gamma$ and $\sigma$ land at
$\xi$, there exists $T\in [0,\infty)$ such that, for all $t\ge T$, $\gamma(t),\sigma(t)\in W\cap\OM$.
Note that, since $U\cap\OM$ is a Gromov model domain, in particular, every two points of 
$\bdy\OM\cap U$ possess the (weak) visibility property with respect to $\koba_{U\cap\OM}$
(see, for instance, \cite[Theorem~2.6]{ZimCharDomLimAut} and \cite[Corollary~3.2]{CMS2023}). 
Therefore, we may invoke \cite[Theorem~1]{ADS2023} to conclude that there exists $C<\infty$ such that
\begin{equation}\label{E:local}
\forall\,z,w\in W\cap\OM,\; \koba_\OM(z,w) \le \koba_{U\cap\OM}(z,w) \le \koba_{\OM}(z,w)+C.  
\end{equation}
This readily implies that $\gamma|_{[T,\infty)}$ and $\sigma|_{[T,\infty)}$ are continuous 
$(1,C)$-quasi-geodesics with respect to $\koba_{U\cap\OM}$. Now consider
$\widehat{\gamma}:[0,\infty)\lraw\OM$ and $\widehat{\sigma}:[0,\infty)\lraw\OM$ defined by
\[
\widehat{\gamma}(t)=\gamma(t+T), \ \ \ \widehat{\sigma}(t)=\sigma(t+T) \ \ \ \forall t\ge0. 
\]
Clearly both $\widehat{\gamma}$, $\widehat{\sigma}$ are $(1, C)$-quasi-geodesic rays in
$(U\cap\OM, \koba_{U\cap\OM})$ landing at $\xi\in\bdy(U\cap\OM)$. Since $(U\cap\OM, \koba_{U\cap\OM})$ is a 
Gromov model domain, appealing to Lemma~5.8 in \cite{AFGG}, we get 
\[
\sup_{t\geq 0}\koba_{U\cap\OM}(\widehat{\gamma}(t), \widehat{\sigma}(t))<\infty. 
\]
This together with \eqref{E:local} implies that 
\[
\sup_{t\geq 0}\koba_{\OM}(\widehat{\gamma}(t), \widehat{\sigma}(t))<\infty, 
\]
from which the desired conclusion follows easily. 
\end{proof}

We now present the proof of {\Cref{thm:loc_Grom_hyp_then_str_asymp}}. 
Our proof closely follows the proof of Theorem~4.3 in 
\cite{AFGG}. The main point to note is that,
under the hypotheses of our theorem, the assumption of 
boundedness in \cite[Theorem~4.3]{AFGG} is redundant.
Keeping this in mind, and for the sake of 
completeness, we first state a lemma, which
is essentially a restatement of \cite[Lemma~4.4]{AFGG}.
To state this lemma we need to make some 
preliminary remarks, which mirror those made in \cite{AFGG} 
immediately before the statement of Lemma~4.4.
\smallskip

With $\OM$ an arbitrary domain that admits squeezing functions, suppose we have a sequence
$(\gamma_n : [-T_n,T_n] \to \OM)_n$ of geodesic segments in $(\OM,\koba_{\OM})$ such that 
$T_n\to\infty$.
Suppose that there exist a sequence $(z_n)$ in $\OM$ and $C<\infty$ such that, for all $n$, 
$\koba_{\OM}(z_n,\gamma_n(0))\le C$ and such that $s_{\OM}(z_n)\to 1$. Write $r_n\defeq s_\OM(z_n)$ 
and, using the definition of the squeezing function, choose, for every $n\in\posint$, an injective 
holomorphic map $\phi_n:\OM\to\ball^d$ such that $\phi_n(z_n)=0$ and $B(0;r_n)\subset \OM_n\defeq 
\phi_n(\OM)$. Let
$\wh{\gamma}_n \defeq \phi_n \circ \gamma_n$; then $\wh{\gamma}_n$ is a geodesic segment in 
$(\OM_n,\koba_{\OM_{n}})$. Since $(\OM_n)$ expands to $\ball^d$, it is easy to see,
by the Arzel{\`a}--Ascoli theorem, that there exists a subsequence of 
$(\wh{\gamma}_n)$ that converges uniformly on the compact subsets of $\R$ to a 
$\koba_{\ball^d}$-geodesic-line $\eta:\R\to\ball^d$; we may
suppose, without loss of generality, that $(\wh{\gamma}_n)$ itself converges uniformly on the compact
subsets of $\R$ to the geodesic line $\eta$. The next lemma allows us to calculate 
$\koba_{\ball^d}(0,\range\,\eta)$ in terms of $\koba_{\OM_n}(0,\range\,\wh{\gamma}_n)$.

\begin{lemma}
In the situation described above, 
\[\koba_{\ball^d}(0,\range\,\eta) = \lim_{n\to\infty} \koba_{\OM_n}(0,\range\,\wh{\gamma}_n). \]
\end{lemma}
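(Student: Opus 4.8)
The plan is to set $a_n\defeq\koba_{\OM_n}(0,\range\,\wh\gamma_n)$ and $a\defeq\koba_{\ball^d}(0,\range\,\eta)$ (a finite number, being at most $\koba_{\ball^d}(0,\eta(0))$), and to prove the two one-sided bounds $\limsup_{n}a_n\le a$ and $\liminf_{n}a_n\ge a$ separately. The single input I would use throughout is the following consequence of the inclusions $B(0;r_n)\subset\OM_n\subset\ball^d$: by the distance-decreasing property of the Kobayashi distance under holomorphic maps (applied to the two inclusions and to the scaling biholomorphism $B(0;r_n)\ni z\mapsto z/r_n\in\ball^d$),
\[
\koba_{\ball^d}(z,w)\ \le\ \koba_{\OM_n}(z,w)\ \le\ \koba_{B(0;r_n)}(z,w)\ =\ \koba_{\ball^d}(z/r_n,\,w/r_n)\qquad(z,w\in B(0;r_n)).
\]
Since $r_n\to1$, whenever $w_n\to w$ in $\ball^d$ one also has $w_n/r_n\to w$, so sandwiching between the two outer terms (each of which converges to $\koba_{\ball^d}(0,w)$ by continuity of the Kobayashi distance of $\ball^d$) yields $\koba_{\OM_n}(0,w_n)\to\koba_{\ball^d}(0,w)$. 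This is the workhorse of both halves.

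For $\limsup_{n}a_n\le a$: given $\varepsilon>0$, I would choose $t^{*}\in\R$ with $\koba_{\ball^d}(0,\eta(t^{*}))<a+\varepsilon$. For all $n$ large, $t^{*}\in[-T_n,T_n]$, so $a_n\le\koba_{\OM_n}(0,\wh\gamma_n(t^{*}))$; since $\wh\gamma_n\to\eta$ uniformly on compacta, $\wh\gamma_n(t^{*})\to\eta(t^{*})$, and by the displayed sandwich $\koba_{\OM_n}(0,\wh\gamma_n(t^{*}))\to\koba_{\ball^d}(0,\eta(t^{*}))<a+\varepsilon$. Hence $\limsup_{n}a_n\le a+\varepsilon$, and one lets $\varepsilon\downarrow0$.

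For $\liminf_{n}a_n\ge a$: for each $n$ pick $t_n\in[-T_n,T_n]$ attaining $\koba_{\OM_n}(0,\wh\gamma_n(t_n))=a_n$ (minimum of a continuous function on a compact interval). Pass to a subsequence along which $a_n\to\liminf_{n}a_n$ and along which either $(t_n)$ is bounded or $|t_n|\to\infty$. The second alternative cannot occur: since $\phi_n\colon\OM\to\OM_n$ is a biholomorphism with $\phi_n(z_n)=0$, we get $\koba_{\OM_n}(0,\wh\gamma_n(0))=\koba_{\OM}(z_n,\gamma_n(0))\le C$, so, $\wh\gamma_n$ being a $\koba_{\OM_n}$-geodesic, $a_n\ge\koba_{\OM_n}(\wh\gamma_n(0),\wh\gamma_n(t_n))-C=|t_n|-C\to\infty$, contradicting the already-established bound $\limsup_{n}a_n\le a<\infty$. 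So $(t_n)$ is bounded; pass to a further subsequence with $t_n\to t^{*}$. Then $\wh\gamma_n(t_n)\to\eta(t^{*})$ (uniform convergence on compacta together with continuity of $\eta$), and since $\OM_n\subset\ball^d$, $a_n=\koba_{\OM_n}(0,\wh\gamma_n(t_n))\ge\koba_{\ball^d}(0,\wh\gamma_n(t_n))\to\koba_{\ball^d}(0,\eta(t^{*}))\ge a$. Combining the two bounds gives $\lim_{n}a_n=a$.

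This is essentially \cite[Lemma~4.4]{AFGG}, and the point worth stressing is that boundedness of $\OM$ plays no role: after applying $\phi_n$ one works entirely inside $\ball^d$ with the domains $\OM_n$ squeezed between $B(0;r_n)$ and $\ball^d$, which is all that the argument uses. I do not expect any genuine difficulty here; the only step calling for a moment's care is ruling out $|t_n|\to\infty$ in the lower bound, which is precisely where the hypotheses $\koba_\OM(z_n,\gamma_n(0))\le C$ and the geodesic property of $\wh\gamma_n$ enter.
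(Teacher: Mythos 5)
Your proof is correct and is essentially the argument the paper relies on: the paper omits the proof entirely, stating it is word for word that of \cite[Lemma~4.4]{AFGG}, and your sandwich $\koba_{\ball^d}\le\koba_{\OM_n}\le\koba_{B(0;r_n)}$ on $B(0;r_n)$ combined with the compactness argument (using $\koba_{\OM}(z_n,\gamma_n(0))\le C$ and the geodesic property to rule out $|t_n|\to\infty$) is exactly that standard argument. Your closing remark that boundedness of $\OM$ plays no role is precisely the point the authors emphasize when importing the lemma into their setting.
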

\noindent We omit the proof because it is the same, word for word, as the proof of Lemma~4.4 in \cite{AFGG}.

\begin{proof}[Proof of {{\Cref{thm:loc_Grom_hyp_then_str_asymp}}}]
Assume, to get a contradiction, that the statement is false. Then there exist $\xi\in\bdy\OM$ and
there exist $\koba_\OM$-geodesic-rays $\gamma$ and $\sigma$ in $\OM$ landing at $\xi$ that are not
strongly asymptotic. By \Cref{prp:loc_Grom_mod_asymp}, $\gamma$ and $\sigma$ are 
$\koba_\OM$-asymptotic. From this point one can follow the proof of \cite[Theorem~4.3]{AFGG} 
{\it verbatim} to obtain the required result.
\end{proof}

\begin{proof}[Proof of {\Cref{thm:strongly_asymp_then_cont_xtnsn}}]
It is easy to see that $\Psi$ is well-defined: if $\xi\in\bdy\OM$ and $\gamma,\sigma$ are geodesic
rays in $(\OM,\koba_\OM)$ landing at $\xi$, then by hypothesis $\gamma$ and $\sigma$ are strongly
asymptotic, and therefore by \Cref{res:geod_str_asymp_Buse_eq}, we conclude that
$B_\gamma=B_\sigma$, whence, a fortiori, $[B_\gamma]=[B_\sigma]$.
This also means that there is a
well-defined function $f_\xi:\OM\times\OM\to\R$ such that, for every geodesic ray $\gamma$ in
$(\OM,\koba_\OM)$ landing at $\xi$, and for every two points $z,p\in\OM$, $f_\xi(z,p)=B_\gamma(z,p)$.
\smallskip

To show that $\Psi$ is continuous, it is enough to prove that if 
$(x_n)\subset\OM$ is such that $x_n\to\xi\in\bdy\OM$
then $(\Psi(x_n))$ converges to $\Psi(\xi)$. To do this, 
it suffices to prove that for a fixed but arbitrary $p\in\OM$, the sequence of functions
$\big( \koba_\OM(\cdot\,,x_n)-\koba_\OM(x_n,p) \big)$ converges to the function $B_\gamma(\cdot\,,p)$,
where $\gamma$ is {\em some} geodesic ray in $(\OM,\koba_\OM)$ landing at $\xi$.
Observe that for $n\in\posint$ and $y,z\in X$ arbitrary,
\[ |\big(\koba_\OM(z,x_n)-\koba_\OM(x_n,p)\big)-\big(\koba_\OM(y,x_n)-\koba_\OM(x_n,p)\big)| \le 
\koba_\OM(z,y), \]
which shows that the sequence of functions considered is uniformly 1-Lipschitz. Since it is also clearly
pointwise relatively compact, 
it follows, by the Arzel{\`a}--Ascoli theorem, that in order to prove that the sequence
$\big( \koba_\OM(\cdot\,,x_n)-\koba_\OM(x_n,p) \big)$ converges (in $C(\OM,\R)$) to 
$f_\xi(\cdot\,,p)$, 
it suffices to prove that it converges pointwise to the said function, i.e.,
for every $z\in\OM$, $\koba_\OM(z,x_n)-\koba_\OM(x_n,p) \to f_\xi(z,p)$. Fix $z\in\OM$. We 
only need to prove that every subsequence of $(\koba_\OM(z,x_n)-\koba_\OM(x_n,p))$ has a further
subsequence that converges to $f_\xi(z,p)$; for notational convenience, we show that
$(\koba_\OM(z,x_n)-\koba_\OM(x_n,p))$ has a subsequence that converges to $f_\xi(z,p)$.
For every $n$, choose geodesic segments $\gamma_n$ and $\sigma_n$
in $(\OM, \koba_\OM)$ joining 
$z$ to $x_n$ and $p$ to $x_n$, respectively. Since $\OM$ is a complete hyperbolic visibility domain,
we may invoke \cite[Lemma~2.16]{CGMS2024} to conclude that $(\gamma_n)$ and $(\sigma_n)$ have
subsequences that converge uniformly on the compact subsets of $[0,\infty)$ to geodesic rays 
$\wt{\gamma}$ and $\wt{\sigma}$ that emanate from $z$ and $p$, respectively, and land at $\xi$. By
passing to subsequences successively, we may assume that $(\gamma_n)$ and $(\sigma_n)$ themselves
converge to $\wt{\gamma}$ and $\wt{\sigma}$, respectively. By hypothesis, $\wt{\gamma}$ and 
$\wt{\sigma}$ are strongly asymptotic, i.e., there exists $T\in\R$ such that
\[ \lim_{t\to\infty} \koba_\OM(\wt{\gamma}(t),\wt{\sigma}(t+T)) = 0. \]

Fix $t\in [0,\infty)$. For all $n$ sufficiently large
\begin{align*}
\koba_\OM(z,x_n) &= \koba_\OM(z,\gamma_n(t)) + \koba_\OM(\gamma_n(t),x_n), \\
\koba_\OM(x_n,p) &\le \koba_\OM(x_n,\gamma_n(t)) + \koba_\OM(\gamma_n(t),p) \\
\koba_\OM(x_n,p) &= \koba_\OM(x_n,\sigma_n(t+T)) + \koba_\OM(\sigma_n(t+T),p) \\
                 &\ge \koba_\OM(x_n,\gamma_n(t)) + \koba_\OM(\gamma_n(t),p) - 
                 2\koba_\OM(\gamma_n(t),\sigma_n(t+T)).
\end{align*}
From the first and second (in)equalities above,
\begin{equation} \label{eqn:lb_diff_def_horofunc1}
\koba_\OM(z,x_n)-\koba_\OM(x_n,p) \ge \koba_\OM(z,\gamma_n(t)) - \koba_\OM(\gamma_n(t),p).
\end{equation}
Similarly, from the first and third inequalities,
\begin{equation} \label{eqn:ub_diff_def_horofunc1}
\koba_\OM(z,x_n)-\koba_\OM(x_n,p) \le \koba_\OM(z,\gamma_n(t)) - \koba_\OM(\gamma_n(t),p) + 
2 \koba_\OM(\gamma_n(t),\sigma_n(t+T)).
\end{equation}
Recalling that $t$ is fixed, take $n\to\infty$ in \eqref{eqn:lb_diff_def_horofunc1} to get
\begin{equation} \label{eqn:lb_for_lim}
\liminf_{n\to\infty} \big( \koba_\OM(z,x_n)-\koba_\OM(x_n,p) \big) \ge \koba_\OM(z,\wt{\gamma}(t))-
\koba_\OM(\wt{\gamma}(t),p).
\end{equation}
Similarly, taking $n\to\infty$ in \eqref{eqn:ub_diff_def_horofunc1} we get
\begin{equation} \label{eqn:ub_for_lim}
\limsup_{n\to\infty} \big( \koba_\OM(z,x_n)-\koba_\OM(x_n,p) \big) \le \koba_\OM(z,\wt{\gamma}(t))-
\koba_\OM(\wt{\gamma}(t),p) + 2\koba_\OM(\wt{\gamma}(t),\wt{\sigma}(t+T)). 
\end{equation}
Now recall that $t$ itself was arbitrary, that 
\[ \lim_{t\to\infty} \big( \koba_\OM(z,\wt{\gamma}(t))-\koba_\OM(\wt{\gamma}(t),p) \big) \]
exists and equals $f_\xi(z,p)$, and that 
\[ \lim_{t\to\infty} \koba_\OM(\wt{\gamma}(t),\wt{\sigma}(t+T)) = 0 \]
to conclude, from \eqref{eqn:lb_for_lim} and \eqref{eqn:ub_for_lim}, that
\[ \lim_{n\to\infty} \big( \koba_\OM(z,x_n)-\koba_\OM(x_n,p) \big) = f_\xi(z,p). \]
By the discussion above, it follows that $\big( \koba_\OM(\cdot\,,x_n)-\koba_\OM(x_n,p) \big)$
converges, in the topology of $C(\OM,\R)$, to $f_\xi(\cdot,p)$. This establishes the continuity of 
$\Psi$. 
\smallskip

That $\Psi:\clos{\Omega}\to \clos{\OM}^H$ is an embedding, 
is a consequence of \Cref{res:if_conv_same_horo_then_Grom_equiv}.  
Note that, to prove the assertion about the embedding,
it suffices to check that if $(x_n)\subset \clos{\Omega}$ and $y\in\clos{\Omega}$ 
are such that
$\Psi(x_n)\to \Psi(y)$, then $x_n\to y$.  
The verification proceeds by considering the possible locations of $x_n$ and $y$ 
(in the interior or on the boundary) and whether $(x_n)$ is bounded.  
We illustrate the argument for one representative case: $x_n\in\bdy\Omega$ and $y\in\bdy\Omega$.
\smallskip
 
Since $(\Omega,\koba_\Omega)$ is complete and has the geodesic visibility property, 
\cite[Theorem~3.3]{CGMS2024} implies that $\clos{\Omega}^{End}$ is sequentially compact.  
Hence, some subsequence $(x_{k_n})$ converges to a point $\xi\in\bdy\clos{\Omega}^{End}$; 
it remains to show that $\xi=y$.  
Using the first countability of $\clos{\Omega}^{End}$ and the metrizability of $\clos{\tau(\Omega)}$ 
(where $\tau$ is as in \Cref{S:Prelims} for $(\Omega,\koba_\Omega)$), 
one can find a sequence $(x'_n)\subset\Omega$ such that 
$x'_n\to\xi$ and $(\Psi(x_{k_n}))$ and $(\Psi(x'_n))$ converge to
the same point (hence to $\Psi(y)$).  
Let $(y_n)\subset\Omega$ be such that $y_n\to y$. 
Since $\Psi|_{\Omega}=\tau$, we have two sequences $(x'_n)$ and $(y_n)$ in $\Omega$ with 
$\tau(x'_n)$ and $\tau(y_n)$ converging to the same element of $\clos{\tau(\Omega)}$.  
By \Cref{res:if_conv_same_horo_then_Grom_equiv},
\[
\lim_{n\to\infty} (x'_n|y_n)_o = \infty,
\]
where $o\in\OM$ is a fixed but arbitrary point. If $\xi\neq y$, then by a standard fact for domains 
with the visibility property (see, e.g.,
\cite[Prop.~2.4]{BNT2022}, \cite[Prop.~3.1]{CMS2023}, \cite[Lem.~2.15]{CGMS2024}),  
\[
\limsup_{n\to\infty} (x'_n|y_n)_o < \infty,
\]
a contradiction. Thus $\xi=y$, completing this case.  
The remaining cases can be handled similarly.
\end{proof}

Now we are ready to prove \Cref{prp:sq_fn_tend_1_bdry_cond1}.

\begin{proof}[Proof of {\Cref{prp:sq_fn_tend_1_bdry_cond1}}]
Given $p\in\bdy\OM$, let $\gamma$ be the connected component of $\bdy\OM$ containing $p$, and let 
$K_\gamma$ be the connected component of $\C\setminus\OM$ containing $\gamma$ (or, equivalently,
$p$). Then we know, by \cite[Lemma~4.11]{CGMS2024}, that there exists a neighbourhood $V$ of $p$ in 
$\C$ such that 
\begin{equation} \label{eqn:prop_nbd_p} 
V=(V\cap\OM)\cup (V\cap K_\gamma). 
\end{equation}
Consider the domain 
\[
\OM_\gamma \defeq \C_\infty \setminus \cl_{\C_\infty}(K_\gamma) 
\]
(as a set, this is equal to $\C\setminus K_\gamma$ when $K_\gamma$ is unbounded as a subset of $\C$ and
is equal to $\C_\infty\setminus K_\gamma$ when $K_\gamma$ is bounded; the point to be emphasised is that
we always regard $\OM_\gamma$ as being a domain in $\C_\infty$).
Then $\OM_\gamma$ is a simply connected domain
in $\C_\infty$ containing $\OM$ whose boundary is precisely $\cl_{\C_\infty}(\gamma)$. Now, by 
\cite[Proposition~4.14]{CGMS2024}, $\cl_{\C_\infty}(\gamma)$ is a Jordan curve. Therefore, if we choose
and fix a biholomorphism $\phi_\gamma:\OM_\gamma\to \unitdisk$,
then, by Carath{\'e}odory's theorem on the extension of biholomorphisms (see, for instance,
\cite[Theorem~4.3.1]{BCM}), 
$\phi_\gamma$ will extend to a homeomorphism, which we will continue to
call $\phi_\gamma$, from $\cl_{\C_\infty}(\OM_\gamma)$ to $\clos{\unitdisk}$. We may assume, by 
post-composing with a suitable automorphism of $\unitdisk$, that $\phi_\gamma(p)=1$.
Note that $V\cap\clos{\OM}$
is a neighbourhood of $p$ in $\clos{\OM}$; so $W\defeq \phi_\gamma(V\cap\clos{\OM})$
is a neighbourhood of $1$
in $\clos{\unitdisk}$. Note that by \eqref{eqn:prop_nbd_p}, 
\[ W\cap\unitdisk = W\cap\phi_\gamma(\OM_\gamma)=W\cap\phi_\gamma(\OM). \]
By the biholomorphic invariance of the squeezing function, writing $\wt{\OM}\defeq \phi_\gamma(\OM)$,
we have
\[ s_{\wt{\OM}}\circ\phi_\gamma=s_\OM. \]
Therefore, in order to prove that $\lim_{\OM\,\ni\,z\to p}s_\OM(z)=1$, it suffices to prove that
\begin{equation} \label{eqn:lim_sq_func_OM-tld_1_1}  
\lim_{W\cap\unitdisk\ni z\to 1}s_{\wt{\OM}}(z)=1. 
\end{equation}
Since $W$ is a neighbourhood of $1$ in $\clos{\unitdisk}$, it follows, from the properties of the
Poincar{\'e} distance, which is the Kobayashi distance on $\unitdisk$, that 
\[ \lim_{z\to 1} \koba_\unitdisk(z,\unitdisk\setminus W) = \infty. \]
Since $\unitdisk\setminus\wt{\OM}\subset\unitdisk\setminus W$, it also follows that
\[ \lim_{z\to 1}\koba_\unitdisk(z,\unitdisk\setminus\wt{\OM}) = \infty. \]
Therefore, given $r<1$, there exists a neighbourhood $\wh{W}$ of $1$ in $\clos{\unitdisk}$ such that
\begin{equation} \label{eqn:Kob_dist_1_OM-tld-cmplmnt_large}  
\forall\,z\in\wh{W}\cap\unitdisk,\; \koba_{\unitdisk}(z,\unitdisk\setminus\wt{\OM}) > 
\koba_\unitdisk(0,r) = \tanh^{-1}(r) = (1/2) \log\left(\frac{1+r}{1-r}\right). 
\end{equation}
For any $z\in\wh{W}\cap\unitdisk$, we choose an automorphism $T_z$ of $\unitdisk$ such that $T_z(z)=0$.
Note that, by the biholomorphic invariance of the Poincar{\'e} distance and 
\eqref{eqn:Kob_dist_1_OM-tld-cmplmnt_large}, 
\[ \koba_\unitdisk(0,\unitdisk\setminus T_z(\wt{\OM})) = \koba_\unitdisk(z,\unitdisk\setminus\wt{\OM})
> \koba_\unitdisk(0,r) = \koba_\unitdisk(0,\unitdisk\setminus D(0,r));\]
therefore, $D(0;r)\subset T_z(\wt{\OM})$. Hence $s_{\wt{\OM}}(z)\ge r$. Since $r$ was arbitrary, this
proves \eqref{eqn:lim_sq_func_OM-tld_1_1} and with it the proposition.
\end{proof}

\begin{corollary}
Suppose that $\OM\subset\C$ is a domain that satisfies Condition~1. Then the mapping $\Psi:\clos{\OM}\to
\clos{(\OM,\koba_\OM)}^H$ given by 
\[ \Psi(\xi) \defeq \begin{cases}
    [\koba_\OM(\cdot\,,\xi)], &\text{if } \xi\in\OM,\\
    [B_\gamma], &\text{if } \xi\in\bdy\OM \text{ and } \gamma \text{ is some geodesic ray in } (\OM,\koba_\OM)
    \text{ landing at } \xi,
\end{cases} \]
is well-defined and an embedding of $\clos{\OM}$ in $\clos{(\OM,\koba_\OM)}^H$.
\end{corollary}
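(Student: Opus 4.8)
The plan is simply to verify that a Condition~1 domain $\OM\subset\C$ satisfies the three hypotheses of \Cref{thm:strongly_asymp_then_cont_xtnsn} — complete Kobayashi hyperbolicity, the geodesic visibility property, and strong asymptoticity of any two geodesic rays landing at a common boundary point — since the map $\Psi$ in the corollary is literally the map appearing in that theorem. The first two hypotheses are recorded for Condition~1 domains in \cite{CGMS2024}; the third is not automatic and will be extracted from \Cref{thm:loc_Grom_hyp_then_str_asymp}.

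First I would assemble the structural input. By \cite{CGMS2024}, a Condition~1 domain is complete Kobayashi hyperbolic and has the geodesic visibility property; it is hyperbolically embedded in $\C$, because Condition~1 domains enjoy the boundary separation property of \cite{CGMS2024}, which --- as noted in \Cref{Sec:Intro} --- is exactly the property of being hyperbolically embedded; and it is a local Gromov model domain, as observed just after \Cref{Def:cond1}. Next I would check that $\OM$ admits squeezing functions. For this I reuse the setup from the proof of \Cref{prp:sq_fn_tend_1_bdry_cond1}: $\OM$ is contained in the simply connected domain $\OM_\gamma=\C_\infty\setminus\cl_{\C_\infty}(K_\gamma)$, whose boundary is a Jordan curve, so composing the inclusion $\OM\hookrightarrow\OM_\gamma$ with a biholomorphism $\OM_\gamma\to\unitdisk$ produces an injective holomorphic map of $\OM$ into $\unitdisk$; hence $s_\OM$ is genuinely defined on $\OM$. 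Finally, \Cref{prp:sq_fn_tend_1_bdry_cond1} supplies $\lim_{\OM\ni z\to p}s_\OM(z)=1$ for every $p\in\bdy\OM$.

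At this point $\OM$ meets every hypothesis of \Cref{thm:loc_Grom_hyp_then_str_asymp}, so for each $\xi\in\bdy\OM$ any two $\koba_\OM$-geodesic rays landing at $\xi$ are strongly asymptotic; combined with the completeness and geodesic visibility already noted, $\OM$ then meets every hypothesis of \Cref{thm:strongly_asymp_then_cont_xtnsn}, which immediately yields that $\Psi$ is well defined and embeds $\clos{\OM}$ into $\clos{(\OM,\koba_\OM)}^H$. I do not expect a real obstacle here: the only place that calls for attention is making sure the facts imported from \cite{CGMS2024} --- complete hyperbolicity, geodesic visibility, and the boundary separation property --- are stated there for the whole (possibly unbounded) class of Condition~1 domains and are not invoked circularly, together with the elementary observation that $\OM$ embeds into the disk so that the squeezing function exists; everything else is a bookkeeping check of hypotheses.
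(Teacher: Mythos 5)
Your proposal is correct and follows essentially the same route as the paper: verify that a Condition~1 domain is a local Gromov model domain with the requisite properties, use \Cref{prp:sq_fn_tend_1_bdry_cond1} together with \Cref{thm:loc_Grom_hyp_then_str_asymp} to get strong asymptoticity of geodesic rays landing at a common boundary point, and then apply \Cref{thm:strongly_asymp_then_cont_xtnsn}. Your write-up is in fact a bit more careful than the paper's, in that it explicitly checks complete hyperbolicity, geodesic visibility, hyperbolic embeddedness (via the boundary separation property), and the existence of the squeezing function via the embedding $\OM\subset\OM_\gamma\cong\unitdisk$, all of which the paper leaves implicit.
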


\begin{proof}
First note that, by the discussion following \Cref{Def:cond1}, $\OM$ is a local Gromov model 
domain. Therefore, by 
\Cref{prp:loc_Grom_mod_asymp}, for every $\xi\in\bdy\OM$, and for every two $\koba_\OM$-geodesic rays
$\gamma$ and $\sigma$ landing at $\xi$, $\gamma$ and $\sigma$ are $\koba_\OM$-asymptotic. Using
\Cref{prp:sq_fn_tend_1_bdry_cond1} and \Cref{thm:loc_Grom_hyp_then_str_asymp}, we conclude that
$\gamma$ and $\sigma$ are strongly asymptotic. Therefore we may invoke 
\Cref{thm:strongly_asymp_then_cont_xtnsn} to draw the required conclusion.
\end{proof}

\bigskip

{\bf Acknowledgements:} Sushil Gorai is partially supported by a Core Research Grant
(CRG/2022/003560) from Science and Engineering Research Board, Department of Science and 
Technology, Government of India. Anwoy Maitra is partially
supported by a New Faculty Initiation Grant
(SB24250650MANFIG009155) from the Indian Institute of
Technology Madras.


\begin{thebibliography}{99}

\bibitem{AFGG}
L.\,Arosio, M.\,Fiacchi, S.\,Gontard and L.\,Guerini, {\em The horofunction
boundary of a Gromov hyperbolic space}, Math. Ann. {\bf 388} (2024), 1163--1204.

\bibitem{Banik}
A.\,Banik, {\em Visibility domains that are not pseudoconvex}, Bull. Sci. Math.
{\bf 193} (2024), Paper No. 103452, 11 pp.
		
\bibitem{Bharali_Maitra} G.\,Bharali and A.\,Maitra, \textit{A weak 
notion of visibility, a family of examples, and Wolff--Denjoy theorems}, 
Ann. Sc. Norm. Super. Pisa Cl. Sci. (5) {\bf 22} (2021), 195--240.
		
\bibitem{Bharali_Zimmer}
G.\,Bharali and A.\,Zimmer, \emph{Goldilocks domains, a weak notion of 
visibility, and applications}, Adv. Math. \textbf{310} (2017), 377--425.

\bibitem{BZ2023} G.\,Bharali and A.\,Zimmer,
{\em Unbounded visibility domains, the end compactification, and
applications}, Trans. Amer. Math. Soc. {\bf 376} (2023), no.~8, 5949--5988.

\bibitem{BCM}
F.\,Bracci, M.\,D.\,Contreras and S.\,D{\'i}az-Madrigal, {\em Continuous 
semigroups of holomorphic self-maps of the unit disc}, Springer 2020.
		
\bibitem{BGNT2022}
F.\,Bracci, H.\,Gaussier, N.\,Nikolov, and P.\,J.\,Thomas, 
{\em Local and global visibility and Gromov hyperbolicity of domains with 
respect to the Kobayashi distance}, Trans.~Amer.~Math.~Soc.~{\bf 377} (2024), no.~1, 471--493.

\bibitem{BNT2022}
F.\,Bracci, N.\,Nikolov and P.\,J.\,Thomas, {\em Visibility of Kobayashi geodesics 
in convex domains and related properties}, Math.\,Z.,\,\textbf{301}(2), (2022), 2011--2035. 

\bibitem{Bridson_Haefliger}
M.\,R.\,Bridson and A.\,Haefliger, {\em Metric Spaces of Non-positive Curvature}, Grundlehren 
der mathematischen Wissenschaften \textbf{319}, Springer-Verlag Berlin, 1999.

\bibitem{CMS2023}
V.\,S.\,Chandel, A.\,Maitra, and A.\,D.\,Sarkar, {\em Notions of visibility with respect to 
the Kobayashi distance: comparison and applications}, Ann.\,Mat.\,Pura Appl.\,(4)~{\bf 203}
(2024), no.~2, 475--498.

\bibitem{CM:2025}
V.\,S.\,Chandel, N.\,Mandal, {\em The metric compactification of a Kobayashi hyperbolic
complex manifold and a Denjoy--Wolff Theorem}, \texttt{arXiv:2507.18156}.

\bibitem{CGMS2024}
V.\,S.\,Chandel, S.\,Gorai, A.\,Maitra, and A.\,D.\,Sarkar, {\em Visibility 
property in one and several variables and its applications}, 
\texttt{arXiv:2406.15298}.
   
\bibitem{GromovBook}
M. Gromov, {\em Metric structures for Riemannian and non-Riemannian spaces.} With 
appendices by M. Katz, P. Pansu and S. Semmes. Translated from the French by Sean Michael 
Bates Progr. Math., \textbf{152} Birkhäuser Boston, Inc., Boston, MA, 1999. xx+585 pp.

\bibitem{Gromov:1981}
M.\,Gromov, Hyperbolic manifolds, groups and actions, Riemann surfaces and related topics: Proceedings
of the 1978 Stony Brook Conference (State Univ. New York, Stony Brook, N.Y., 1978), pp. 183--213,
Ann. of Math. Stud., No. 97,
Princeton University Press, Princeton, NJ, 1981

\bibitem{Rumpa_2025_taut}
Rumpa Masanta, {\em Taut visibility domains are not necessarily Kobayashi complete}, Proc. Amer. Math.
Soc. {\bf 153} (2025), no.~7, 3077--3081.

\bibitem{Rumpa_2025_Vis_mani}
Rumpa Masanta, {\em Visibility domains relative to the Kobayashi distance in complex manifolds},
Trans. Amer. Math. Soc. (2025), DOI: 10.1090/tran/9566

\bibitem{Nik_Okt_Tho}
N.\,Nikolov, A.\,Y.\,{\"O}kten and P.\,J.\,Thomas, {\em Local and 
global notions of visibility with respect to Kobayashi distance: a comparison}, 
Ann. Polon. Math.~{\bf 132} (2024), no.~2, 169--185.

\bibitem{Nik_Okt_Tho_Visib_C2_pscvx}
N.\,Nikolov, A.\,Y.\,{\"O}kten and P.\,J.\,Thomas, {\em Visible $\smoo^2$-smooth domains
are pseudoconvex}, Bull. Sci. Math. {\bf 197} (2024), Paper No. 103525, 8 pp.

\bibitem{Pomme}
C.\,Pommerenke, Boundary behaviour of conformal maps, Grundlehren der Mathematischen 
Wissenschaften {\bf 299}, Springer Verlag Berlin Heidelberg 1992.

\bibitem{RoyKobDist} 
H.\,L.\,Royden, {\em Remarks on the Kobayashi metric}, Several Complex Variables, College Park, MD, USA,
1970, Part 2, Lecture Notes in Mathematics {\bf 185}.

\bibitem{ADS2023}
A.\,D.\,Sarkar, {\em Localization of the Kobayashi distance for any visibility domain}, J.~Geom.~Anal.~{\bf 33} 
(2023), no.~5, Paper No.~144, 16~pp.

\bibitem{Schilling}
A.-S.\,Schilling, Horofunction Compactification of Finite-Dimensional Normed Spaces and of Symmetric Spaces,
Diploma Thesis, Fakultät für Mathematik und Informatik, Ruprecht-Karls-Universität Heidelberg, 2013.

\bibitem{Web-Win-Boundaries-hyperbolic}
C.\,Webster and A.\,Winchester, {\em Boundaries of hyperbolic metric spaces}, Pacific J.~Math.~{\bf 221}  (2005),
no.~1, 147--158.

\bibitem{ZimGromHypKobMetConvDomFinType}
A.\,M.\,Zimmer, {\em Gromov hyperbolicity and the Kobayashi metric on convex domains of finite
type}, Math.~Ann.~{\bf 365} (2016), 1425--1498.

\bibitem{ZimCharDomLimAut}
A.\,M.\,Zimmer, {\em Characterizing domains by the limit set of their automorphism group}, 
Adv.~Math.~{\bf 308} (2017), 438--482.
		
\end{thebibliography}
\end{document}